\documentclass[preprint,12pt]{elsarticle}

\usepackage{hyperref,amsmath,amsfonts,mathrsfs}
\usepackage{graphicx}
\usepackage{amsfonts}
\usepackage{amsbsy}
\usepackage{amssymb}
\usepackage{amsmath,tikz,pgfplots}
\usepackage{tikz}
\usetikzlibrary{arrows.meta}
\usepackage[margin=1in]{geometry}
\usetikzlibrary{decorations.pathmorphing}
\usepackage{tikz}
\numberwithin{equation}{section}
\usetikzlibrary{arrows.meta, positioning}
\usepackage{graphics}
\usepackage{graphicx}
\usepackage{amsfonts}
\usepackage{amsbsy}
\usepackage{mathabx}
\usepackage{amssymb}
\usepackage{amsmath,tikz,pgfplots}
\usetikzlibrary{decorations.pathmorphing}
\usepackage{graphics}
\def\bpsp{\begin{pspicture}}
\def\epsp{\end{pspicture}}

\newtheorem{theorem}{Theorem}[section]
\newtheorem{remark}[theorem]{Remark}
\newtheorem{example}[theorem]{Example}
\newtheorem{lemma}[theorem]{Lemma}
\newtheorem{corollary}[theorem]{Corollary}
\newtheorem{definition}[theorem]{Definition}
\newtheorem{proposition}[theorem]{Proposition}
\newtheorem{note}{Note}
\newtheorem{case}{Case}
\newtheorem{conjecture}{Conjecture}
\newtheorem{question}{Question}

\newcommand{\bea}{\begin{eqnarray}}
\newcommand{\eea}{\end{eqnarray}}
\newcommand{\beq}{\begin{eqnarray*}}
\newcommand{\eeq}{\end{eqnarray*}}

\def\m4{\mbox{\rm ~(mod $4$)}}

\def \bd{\begin{definition}}
\def \ed{\end{definition}}
\def \bqu{\begin{question}}
\def \equ{\end{question}}
\def \bcc{\begin{conjecture}}
\def \ecc{\end{conjecture}}
\def \bt{\begin{theorem}}
\def \et{\end{theorem}}
\def \bl{\begin{lemma}}
\def \el{\end{lemma}}
\def \bc{\begin{corollary}}
\def \ec{\end{corollary}}
\def \be{\begin{equation}}
\def \ee{\end{equation}}
\def \ben{\begin{enumerate}}
\def \een{\end{enumerate}}
\def \ba{\begin{array}}
\def \ea{\end{array}}
\def \bp{\begin{proposition}}
\def \ep{\end{proposition}}
\def \bx{\begin{example}}
\def \ex{\end{example}}
\def \br{\begin{remark}}
\def \er{\end{remark}}
\def \bdsc{\begin{description}}
\def \edsc{\end{description}}

\def \bn{\begin{case}}
\def \en{\end{case}}
\def \bnt{\begin{note}}
\def \ent{\end{note}}
\def\1{1\!\!1}

\def\mm2{\mbox{\rm ~(mod $2$)}}
\def\m4{\mbox{\rm ~(mod $4$)}}

\def\qed{\nolinebreak\hfill\rule{.2cm}{.2cm}\par\addvspace{.5cm}}

\def\m{\mu}

\def\1{\textbf{1}}
\def\0{\textbf{0}}

\journal{ABC}
\usetikzlibrary{arrows.meta, arrows}
\begin{document}

\begin{frontmatter}

%% Title, authors and addresses

%% use the tnoteref command within \title for footnotes;
%% use the tnotetext command for the associated footnote;
%% use the fnref command within \author or \address for footnotes;
%% use the fntext command for the associated footnote;
%% use the corref command within \author for corresponding author footnotes;
%% use the cortext command for the associated footnote;
%% use the ead command for the email address,
%% and the form \ead[url] for the home page:
%%
%% \title{Title\tnoteref{label1}}
%% \tnotetext[label1]{}
%% \author{Name\corref{cor1}\fnref{label2}}
%% \ead{email address}
%% \ead[url]{home page}
%% \fntext[label2]{}
%% \cortext[cor1]{}
%% \address{Address\fnref{label3}}
%% \fntext[label3]{}

\title{On singular values and trace norm of signed digraphs.}
%% use optional labels to link authors explicitly to addresses:
%% \author[label1,label2]{<author name>}
%% \address[label1]{<address>}
%% \address[label2]{<address>}
\author{Mushtaq A. Bhat$^{a}$}
\author{Peer Abdul Manan$^{b}$}

\address{Department of  Mathematics,  National Institute of Technology Srinagar, Jammu and Kashmir India-190006}
\address {$^{a}$mushtaqab@nitsri.ac.in;~~~$^{b}$\text{mananab214@gmail.com}}

\begin{abstract}
  Let $S = (D, \sigma)$ be a signed digraph, where $D=(\mathcal{V},\mathcal{A})$ is the underlying digraph of $S$ and $\sigma:\mathcal{A}\rightarrow\{-1,+1\}$ is a sign function. In this paper, we study the adjacency singular values of signed digraphs. We provide a necessary and sufficient condition for the singular values of a digraph to remain invariant under signing. We study rank of signed digraphs and determine signed digraphs with rank one. As an application of this result, we obtain a lower bound for the trace norm of signed digraphs and characterize the extremal signed digraphs.

\end{abstract}

\vskip 0.2 true cm

\begin{keyword} Rank, Signed digraph, Singular value, Switching, Trace norm.
\vskip 0.2 true cm

%% keywords here, in the form: keyword \sep keyword

$MSC$: 05C20, 05C50, 15A60 %% MSC codes here, in the form: \MSC code \sep code
%% or \MSC[2008] code \sep code (2020 is the default)

\end{keyword}

\end{frontmatter}

\section{\bf Introduction}

A signed digraph is a pair 
$S = (D, \sigma)$, where $D = (\mathcal{V}, \mathcal{A})$ is the underlying digraph and 
$\sigma : \mathcal{A} \to \{-1, 1\}$ is a sign function. All our signed digraphs are simple i.e., free from loops and parallel signed arcs. An arc from a vertex $u$ to a vertex $v$ is denoted by $uv$. 
An arc $uv$ is said to be positive or negative according as $\sigma(uv) = +1$ or $\sigma(uv) = -1$. The sign of a signed digraph $S$ is defined as the product of the signs 
of all its arcs. The sign of a signed subdigraph  $H$ of signed digraph $S$ is the restriction of the sign function to $H$. In a digraph $D$, a directed path of order $k$ is denoted by $\overrightarrow{P}_k$, consists of $k$ vertices say $v_1, v_2, \dots, v_k$ and arcs $v_1v_2, v_2v_3,\dots, v_{k-1}v_k$. In a digraph $D$, a directed cycle of order $k$ is denoted by $\overrightarrow{C}_k$, consists of $k$ vertices say $v_1, v_2, \dots, v_k$ and arcs $v_1v_2, v_2v_3,\dots, v_{k-1}v_k, v_kv_1$. The underlying graph $G$ of a digraph $D$ is obtained by removing direction of each arc of $D$ and replacing each directed cycle of length $2$ by an edge in $G$. Moreover, if $P_k$ is path in underlying graph $G$ of digraph $D$ but is not the directed path $\overrightarrow{P}_k$ in $D$, we call it a semi-directed path. Also, if $C_k$ is cycle in underlying graph $G$ of digraph $D$ but is not the directed cycle $\overrightarrow{C}_k$ in $D$, we call it a semi directed cycle. A directed cycle (semi directed cycle) in a signed digraph is said to be positive or negative according as its sign is positive or negative respectively. A signed digraph is said to be balanced if each of its directed cycle and semi-directed cycle is positive. A positive directed cycle of order $n$ is denoted by $\overrightarrow{C}_n^+$ and a negative 
directed cycle of order $n$ is denoted by $\overrightarrow {C}_n^{-}$.\\ Let $S$ be a signed digraph of order $n$ with vertices $v_1, v_2, \dots, v_n$. Then the adjacency matrix of $S$, denoted by 
$A(S) = [a_{ij}]_{n \times n}$, is the square matrix of order $n$ with $a_{ij}=\sigma(v_i, v_j)$ if there is an arc from $v_i$ to $v_j$ and zero, otherwise. The characteristic polynomial of $A(S)$, $\phi_{A(S)}(\lambda) = \det(A(S) - \lambda I_n)$ is known as the characteristic polynomial of $S$ and the eigenvalues of $A(S)$ are known as the eigenvalues of $S$. A signed graph $\Sigma$ is a pair $\Sigma=(G(\mathcal{V},\mathcal{E}),\sigma)$, where $\sigma:\mathcal{E}\rightarrow\{-1,+1\}$ is a sign function and $G=G(\mathcal{V},\mathcal{E})$ is the underlying graph of $\Sigma$. 
A signed digraph $S$ is said to be symmetric if, for every arc $uv \in \mathcal{A}(S)$, 
the arc $vu \in \mathcal{A(S)}$ also exists with the same sign, where $u, v \in \mathcal{V}(S)$. 
There exists a one-to-one correspondence between signed graphs and symmetric 
signed digraphs given by $\Sigma \longleftrightarrow \overleftrightarrow{\Sigma}$, where $\overleftrightarrow{\Sigma}$ denotes the signed digraph obtained from signed graph $\Sigma$ with the same vertex set as that of the signed graph $\Sigma$ and each signed edge in $\Sigma$ is replaced by a pair of 
symmetric arcs $uv$ and $vu$, both carrying the same sign as that of the edge $uv$. 
Under this correspondence, a signed graph can be identified with a symmetric signed digraph. For a vertex $u$ in a digraph $D$, the outdegree $d^+(u)$ is the number 
of arcs leaving $u$, and the indegree $d^-(u)$ is the number of 
arcs entering $u$. The indegree and outdegree in signed digraph are those of its underlying digraph.\\ 
Let $S=(D(\mathcal{V},\mathcal{A}),\sigma)$ be a signed digraph and $Y \subset \mathcal{V}$. Switching $S$ by $Y$ is the signed digraph obtained from $S$ by changing signs of all arcs between $Y$ and $\mathcal{V}\setminus Y$. For switching in signed graphs and undefined terminology see \cite{bsp,tz}. The switched signed digraph thus obtained is denoted by $S^Y$. Note that switching is an equivalence relation on the signings of digraph and its equivalence classes are known as switching classes. An alternative way to define switching is by means of a function $\chi:\mathcal{V}\rightarrow \{-1,+1\}$ such that $\sigma^{\chi}(uv)=\chi(u)\sigma(uv)\chi(v)$. In terms of matrices if $A$ is the adjacency matrix of a signed digraph $S$ and $A^{\chi}$ be the adjacency matrix of switched signed digraph $S^{\chi}$, then $A^{\chi}=M^{-1}AM$, where $M$ is the signature matrix (i.e., the diagonal matrix with diagonal entries from the set $\{-1,+1\}$) with $M_{ii}=\chi(v_i)$. Note that $M=M^T=M^{-1}$. Two signed digraphs are said to be switching isomorphic if one is isomorphic to switching of other. For example in undirected case see \cite{tz}.\\
The singular values of a signed digraph $S$ are singular values of its adjacency matrix $A$ i.e., the positive square roots of eigenvalues of $AA^T$. If a signed digraph $S$ of order $n$ has $k\le n$ distinct singular values $\sigma_1, \sigma_2, \dots, \sigma_k$ with respective multiplicities $m_1, m_2, \dots, m_k$, then we write the set of singular values as $\{\sigma_1^{(m_1)}, \sigma_2^{(m_2)}, \dots, \sigma_k^{(m_k)}\}$ and call it singular spectrum of $S$. We denote the singular spectrum of a signed digraph $S$ by $\mathcal{S}(S)$. Throughout, we assume the order of singular values as $\sigma_1(S)\ge\sigma_2(S)\ge \dots \ge\sigma_n(S)\ge0$. The singular value $\sigma_1(S)$ is called the spectral norm of $S$. The trace norm of a signed digraph $S$ with singular values $\sigma_1(S), \sigma_2(S), \dots, \sigma_n(S)$ is denoted by $\|A\|_*$ and is defined as $$\|A\|_*=\sum_{i=1}^{n}\sigma_i(S).$$
The notion of energy in signed graphs was given by Germina et al.~\cite{ghz} and they defined the energy of a signed graph as the sum of absolute values of signed graph eigenvalues. The concept of energy was extended to signed digraphs by Pirzada and Bhat ~\cite{pb} and they defined the energy of a signed digraph as the sum of absolute values of real parts of signed digraph eigenvalues. It is clear that for signed graphs trace norm becomes the energy. Agudelo, Rada \cite{ar} obtained lower bounds for the trace norm of digraphs. Agudelo, Pe{\~n}a and Rada obtained the extremal values of trace norm over oriented trees. For more on trace norm and energy see \cite {amr,bp,bsp,gc,mr,psg,rgw}.  In this paper, we study the energy of signed digraphs by using trace norm definition.\\
By $\mathcal{T}(n)$, we denote the set of directed trees of order $n$.  Let $\overrightarrow{K}_{p,q}$ denote the complete bipartite digraph with partite sets say $P=\{u_1,u_2,\dots,u_p\}$ and $Q=v_1,v_2,\dots, v_q$ and  $\mathcal{A}(\overrightarrow{K}_{p,q})=\{u_iv_j:i=1,2,\dots, p ~~\text{and}~~ j=1,2,\dots,q\}$. The rest of the paper is oragnized as follows. Section $1$ gives introduction of the problem and some definitions used in the sequel. In section $2$, we state the preliminary results used throughout the paper. In section $3$, we compute the singular values of some basic signed digraphs. We provide the definition of bipartite signed graph associated to a signed digraph and using this we provide a necessary and sufficient condition for two signatures $\tau$ and $\sigma$ to be equivalent by means of signature matrices. We completely characterize signed digraphs $S=(D,\sigma)$ which share same singular spectrum with their respective underlying digraphs. This result gives families of switching non-isomorphic signed digraphs with same singular spectrum. In section $4$, we characterize signed digraphs with rank one and as an application of this result, we obtain a lower bound for the trace norm of signed digraphs and characterize the extremal signed digraphs.

\section{\bf Preliminaries}
In this section we recall some known results that will be used in sequel.\\
The following result is known as the interlacing property of singular values.
\begin{lemma} \cite{hjtop} Let $A\in M_n(\mathbb{C})$ and let $A_r$ denote a submatrix of $A$ obtained by deleting $r$ rows and /or columns from $A$. Then $$\sigma_k(A)\ge \sigma_k(A_r)\ge \sigma_{k+r}(A), ~~~k=1,2,\dots,n,$$
where for $X\in M_n(\mathbb{C})$, we set $\sigma_j(X)=0$ if $j>n$. In particular, if $A\in M_n(\mathbb{C})$ and $B$ is a principal submatrix [or any submatrix] of $A$ of order $r$. Then $$\sigma_k(A)\ge \sigma_k(B)\ge \sigma_{k+2(n-r)}(A),$$
	where $k=1,2,\dots,n.$ 
\end{lemma}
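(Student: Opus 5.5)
The plan is to derive both inequalities from the Courant--Fischer (min--max) characterization of singular values. I first reduce to deleting a single row or a single column and then induct on $r$. Recall that for $X\in M_{m\times n}(\mathbb{C})$,
$$\sigma_k(X)=\max_{\dim V=k}\ \min_{x\in V,\ \|x\|=1}\|Xx\| =\min_{\dim W=n-k+1}\ \max_{x\in W,\ \|x\|=1}\|Xx\|,$$
with the convention $\sigma_j(X)=0$ for $j$ beyond the number of nonzero singular values (equivalently beyond $\min(m,n)$). Since $\sigma_k(X)=\sigma_k(X^T)$, deleting a row of $A$ amounts to deleting a column of $A^T$, so it suffices to treat column deletion.

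\textbf{Single column deletion.} Let $B=A$ with column $j$ removed. Then $Bx=A\hat x$, where $\hat x$ is $x$ padded with a zero in position $j$, and $\|\hat x\|=\|x\|$.
\begin{itemize}
\item For the upper bound, take a $k$-dimensional subspace $V$ realizing $\sigma_k(B)$ in the max--min formula. Its padding $\hat V$ is a $k$-dimensional subspace on which $\|A\hat x\|=\|Bx\|$, so $\sigma_k(A)\ge\sigma_k(B)$.
\item For the lower bound, use the max--min formula for $\sigma_{k+1}(A)$ with a $(k+1)$-dimensional $U$. The space $U\cap\{x:x_j=0\}$ has dimension at least $k$, and it is the padding of a subspace of $\mathbb{C}^{n-1}$. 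Hence $\sigma_{k+1}(A)\le\sigma_k(B)$.
\end{itemize}

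\textbf{Induction.} Deleting $r$ rows and/or columns one at a time and chaining the single-step inequalities gives $\sigma_k(A)\ge\sigma_k(A_r)\ge\sigma_{k+r}(A)$. At each step the index shifts by at most one, so after $r$ deletions it shifts by at most $r$. Indices that exceed the size of the matrix are handled by the zero convention; I would check this briefly at the boundary, since the inequalities there are trivial.

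\textbf{The particular case.} An order-$r$ principal submatrix, or any $r\times r$ submatrix, is obtained by deleting $n-r$ rows and $n-r$ columns, that is, $2(n-r)$ deletions in total. Applying the general statement with $2(n-r)$ in place of $r$ gives $\sigma_k(A)\ge\sigma_k(B)\ge\sigma_{k+2(n-r)}(A)$.

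The only delicate point is the dimension count in the lower bound, namely intersecting with the hyperplane $x_j=0$, together with consistent bookkeeping of the zero convention for rectangular intermediate matrices. The rest is routine.
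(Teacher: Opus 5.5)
The paper gives no proof of this lemma; it simply cites Horn and Johnson. Your proposal is correct and is essentially the standard proof: transpose and iterate to reduce to deleting a single column, then compare variational characterizations. The only difference from the usual textbook route is in the single-column step. There one observes that $B^*B$ is the principal submatrix of $A^*A$ obtained by deleting row and column $j$, and then applies Cauchy interlacing for Hermitian matrices. You instead run the same Courant--Fischer hyperplane-intersection argument directly on $\|Ax\|$; the two are equivalent. You also read the zero convention correctly for rectangular intermediate matrices ($\sigma_j(X)=0$ for $j>\min(p,q)$), whereas the statement as written only covers square $X$.
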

A vertex $v$ in a digraph is said to be a sink or source vertex if $d^+(v)=0$ or $d^-(v)=0$ respectively. A digraph is said to be sink-source if each of its vertex is either a sink or a source vertex. The following result holds in this direction.

\begin{lemma} \label{1.2}\cite{mr} Let $G$ be a graph. Then $G$ has a sink-source orientation if and only if $G$ is bipartite.\end{lemma}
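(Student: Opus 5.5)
We prove the two implications of Lemma~\ref{1.2} separately, working throughout with the underlying graph $G$ and an orientation $D$ of it. Here a \emph{sink-source orientation} means an assignment of a direction to each edge of $G$ such that every vertex of the resulting digraph is either a sink ($d^+=0$) or a source ($d^-=0$). We treat isolated vertices as both and ignore them.

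\emph{Necessity.} Suppose $D$ is a sink-source orientation of $G$. Let $X$ be the set of source vertices with $d^+(v)>0$, and let $Y$ be all remaining vertices, so $Y$ consists of the sinks together with the isolated vertices. Every arc $uv$ of $D$ has $d^+(u)\ge 1$. Hence $u$ is not a sink, so it must be a source, and therefore $u\in X$. Likewise $d^-(v)\ge 1$, so $v$ is not a source; it must be a sink with positive indegree, and therefore $v\in Y$. Consequently every edge of $G$ joins $X$ to $Y$. This makes $(X,Y)$ a bipartition, and $G$ is bipartite.

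\emph{Sufficiency.} Suppose $G$ is bipartite with bipartition $(X,Y)$. Orient every edge $xy$ with $x\in X$ and $y\in Y$ as the arc $xy$. Each vertex of $X$ then has indegree $0$ and is a source, and each vertex of $Y$ has outdegree $0$ and is a sink. So this orientation is sink-source. Equivalently, the resulting digraph is a subdigraph of $\overrightarrow{K}_{|X|,|Y|}$ as defined in the introduction.

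The only delicate point is the bookkeeping in the necessity direction. A vertex with $d^+(v)=d^-(v)=0$ is simultaneously a sink and a source, so it must be placed in one part consistently; the choice above puts it in $Y$. This is harmless because such a vertex has no incident edges. Once this convention is fixed, both directions reduce to the definitions, and no earlier result of the paper is needed.
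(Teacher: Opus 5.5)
Your proof is correct: taking $X$ to be the sources of positive outdegree and $Y$ everything else (the sinks, including isolated vertices) forces every arc to run from $X$ to $Y$. Conversely, orienting every edge from one side of a bipartition to the other yields a sink-source orientation.

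The paper gives no proof of this lemma; it is quoted from the literature. So there is no argument to compare against. Yours is the standard direct one, and your handling of isolated vertices (which are both sinks and sources) is exactly what the necessity direction needs. That direction is the one the paper later relies on in Example 3.3 and in Claim 1 of Theorem 4.1.
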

\begin{lemma}\cite{hjtop} Let $C$ be a circulant matrix of order $n$ with first row $[\alpha_0,\alpha_1,\dots,\alpha_{n-1}]$. Then the eigenvalues $\lambda_k$ of $C$ are $$\lambda_k=\sum\limits_{j=0}^{n-1}\alpha_j\omega^{jk},~~\text{where}~~ k=0,1,2,\dots,n-1~~ \text{and}~~ \omega=\exp\left(\frac{2\pi\iota}{n}\right) ~is~ ~n^{\text{th}}~ root~ of~ unity.$$
\end{lemma}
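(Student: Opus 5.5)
The statement to prove is the circulant eigenvalue lemma: the eigenvalues of a circulant $C$ with first row $[\alpha_0,\dots,\alpha_{n-1}]$ are $\lambda_k=\sum_{j}\alpha_j\omega^{jk}$. My plan is to exhibit an explicit eigenvector for each $k$. Since the Fourier vectors form a basis, these $n$ eigenpairs then account for the whole spectrum with multiplicities.

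\emph{Step 1: set up the matrix.} Write $C=[c_{rs}]$ with indices $r,s\in\{0,1,\dots,n-1\}$. Being circulant means $c_{rs}=\alpha_{(s-r)\bmod n}$.

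\emph{Step 2: the candidate eigenvector.} For each $k$, define $x_k=(1,\omega^k,\omega^{2k},\dots,\omega^{(n-1)k})^T$. Then compute the $r$-th entry of $Cx_k$:
$$(Cx_k)_r=\sum_{s=0}^{n-1}\alpha_{(s-r)\bmod n}\,\omega^{sk}.$$
Substitute $j=(s-r)\bmod n$. As $s$ runs over $\{0,\dots,n-1\}$, so does $j$, and $\omega^{sk}=\omega^{(r+j)k}$ because $\omega^n=1$. This gives
$$(Cx_k)_r=\omega^{rk}\sum_{j=0}^{n-1}\alpha_j\omega^{jk}=\lambda_k\,(x_k)_r,$$
so $Cx_k=\lambda_k x_k$. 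The step that needs care is this reindexing modulo $n$; it works only because $\omega$ is an $n$-th root of unity, which makes the wraparound invisible.

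\emph{Step 3: account for the full spectrum.} The vectors $x_0,\dots,x_{n-1}$ are the columns of the Vandermonde (Fourier) matrix $F=[\omega^{rk}]$. Its nodes $\omega^0,\dots,\omega^{n-1}$ are distinct, so $F$ is invertible; indeed $F^*F=nI$ by the orthogonality relation $\sum_r\omega^{r(k-l)}=n\delta_{kl}$. Step 2 gives $CF=F\,\mathrm{diag}(\lambda_0,\dots,\lambda_{n-1})$, hence
$$C=F\,\mathrm{diag}(\lambda_0,\dots,\lambda_{n-1})\,F^{-1}.$$
Therefore the characteristic polynomial of $C$ is $\prod_k(\lambda-\lambda_k)$, and the eigenvalues counted with multiplicity are exactly the $\lambda_k$, $k=0,\dots,n-1$.
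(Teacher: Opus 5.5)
Your proof is correct and complete. The reindexing $j=(s-r)\bmod n$ is valid because $\omega^n=1$. Step 3, where $F^*F=nI$ gives $C=F\,\mathrm{diag}(\lambda_0,\dots,\lambda_{n-1})F^{-1}$, is what turns ``each $\lambda_k$ is an eigenvalue'' into ``these are exactly the eigenvalues with multiplicity.'' The paper relies on that multiplicity statement when it reads off spectra with repeated values, such as $B_3=\mathrm{Circ}[2~1~1]$ with eigenvalues $4,1,1$.

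The paper gives no proof of this lemma; it simply quotes it from Horn and Johnson. The usual textbook argument writes $C=\sum_{j=0}^{n-1}\alpha_j P^j$, where $P$ is the basic cyclic permutation matrix ($P_{r,r+1}=1$, indices mod $n$). It observes that $Px_k=\omega^k x_k$ and then applies the polynomial $\sum_j\alpha_j t^j$ to obtain $Cx_k=\lambda_k x_k$. Your entrywise computation is the same argument without naming $P$.

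If one used the opposite shift convention, the formula would become $\sum_j\alpha_j\omega^{-jk}$. This gives the same multiset, since $k\mapsto -k$ permutes $\{0,1,\dots,n-1\}$ modulo $n$, so the statement does not depend on that choice.
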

\begin{lemma} \cite{bp} Let $B$ a bipartite signed graph. Then if $\lambda$ is an eigenvalue of $B$, then $-\lambda$ is also an eigenvalue of $B$ with same multiplicity.
\end{lemma}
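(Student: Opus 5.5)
A bipartite signed graph $B$ has parts $X$ and $Y$, and the plan is to exploit the block structure of its adjacency matrix. Order the vertices so that those of $X$ come first and those of $Y$ after. Then
$$A(B)=\begin{pmatrix} 0 & C\\ C^{T} & 0\end{pmatrix},$$
where $C$ is the $|X|\times|Y|$ matrix whose entries are the edge signs (or $0$). Since $A(B)$ is real symmetric, it is diagonalizable, so the algebraic multiplicity of each eigenvalue equals its geometric multiplicity. It therefore suffices to produce a linear isomorphism between the eigenspaces for $\lambda$ and for $-\lambda$.

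Let $M$ be the signature matrix with $M_{ii}=1$ for $v_i\in X$ and $M_{ii}=-1$ for $v_i\in Y$. A block computation gives $MA(B)M=-A(B)$: conjugating by $M$ fixes the zero diagonal blocks and negates both off-diagonal blocks $C$ and $C^T$. Because $M=M^{-1}$, this says $A(B)$ is similar to $-A(B)$.

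From the similarity, $\phi_{A(B)}(\lambda)=\det(A(B)-\lambda I)=\det(-A(B)-\lambda I)=(-1)^n\phi_{A(B)}(-\lambda)$. Hence the roots of the characteristic polynomial are symmetric about $0$, multiplicities included. Concretely, if $x=(x_X,x_Y)^T$ satisfies $A(B)x=\lambda x$, then $Mx=(x_X,-x_Y)^T$ satisfies $A(B)(Mx)=-\lambda Mx$. Since $M$ is invertible, $x\mapsto Mx$ maps the $\lambda$-eigenspace injectively into the $-\lambda$-eigenspace. Applying the same map in the other direction shows the two eigenspaces have equal dimension.

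There is no real obstacle here. The only points to check are that the vertex relabelling is a permutation similarity, so it does not change the spectrum, and that the result holds for any signing, since it uses only the block zero pattern and not the signs in $C$. Note that this is exactly the switching of $B$ by the vertex set $Y$: that switching changes the sign of every edge, so it sends $B$ to $-B$.
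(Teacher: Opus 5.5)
Your proof is correct: switching by one part, i.e. conjugating by the signature matrix $M=\text{diag}(I_X,-I_Y)$, gives $MA(B)M=-A(B)$, so $A(B)$ is similar to $-A(B)$ and its spectrum is symmetric about $0$ with multiplicities, regardless of the signing. The paper gives no proof of this lemma and simply quotes it from its reference, so there is no argument to compare against. Yours is the standard one and fits the paper's own signature-matrix framework (Lemma 2.7 and Theorem 3.9).
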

\begin{proposition} In a signed digraph, the switching preserves the sign of a directed cycle or semi directed cycle.
\end{proposition}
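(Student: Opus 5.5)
The plan is to express the sign of a directed or semi-directed cycle as a product of arc signs and then check that each vertex's switching factor enters that product exactly twice. Let $C$ be a directed cycle or a semi-directed cycle in $S=(D,\sigma)$. Either way, $C$ is a cycle $v_1v_2\cdots v_kv_1$ of the underlying graph $G$, and for each consecutive pair $\{v_i,v_{i+1}\}$ (indices taken modulo $k$) it uses exactly one arc $a_i$ of $D$. That arc is either $v_iv_{i+1}$ or $v_{i+1}v_i$; in a semi-directed cycle the orientations may vary from pair to pair. The sign of $C$ is then
\[
\sigma(C)=\prod_{i=1}^{k}\sigma(a_i).
\]

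Switching will be described by the function $\chi:\mathcal{V}\rightarrow\{-1,+1\}$ from the introduction, with $\sigma^{\chi}(uv)=\chi(u)\sigma(uv)\chi(v)$. This is equivalent to switching by a set $Y$: take $\chi=-1$ on $Y$ and $+1$ elsewhere, so that exactly the arcs between $Y$ and $\mathcal{V}\setminus Y$ change sign. The key point is that the factor $\chi(u)\chi(v)$ is symmetric in $u$ and $v$. Consequently, whichever orientation $a_i$ has, we get $\sigma^{\chi}(a_i)=\chi(v_i)\chi(v_{i+1})\sigma(a_i)$.

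Multiplying over $i=1,\dots,k$ gives
\[
\sigma^{\chi}(C)=\Bigl(\prod_{i=1}^{k}\chi(v_i)\chi(v_{i+1})\Bigr)\sigma(C)=\Bigl(\prod_{i=1}^{k}\chi(v_i)\Bigr)^{2}\sigma(C)=\sigma(C).
\]
The middle equality holds because each vertex of the cycle is an endpoint of exactly two consecutive pairs, so each $\chi(v_i)$ appears twice. The last equality uses $\chi(v_i)^2=1$.

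The only step needing care is the case of a $2$-cycle $u v u$ in $D$. Such a $2$-cycle is a directed cycle $\overrightarrow{C}_2$ with arcs $uv$ and $vu$, and its sign becomes $\chi(u)^2\chi(v)^2\sigma(uv)\sigma(vu)$, which is unchanged. The general argument covers this once we note that a cycle in $G$ of length at least $3$ uses one arc per edge. There is one ambiguity: when both $uv$ and $vu$ are present, it is unclear which arc a semi-directed cycle uses. Since every arc's sign is multiplied by the same factor $\chi(u)\chi(v)$, either choice gives the same conclusion. So there is no real obstacle; the whole proof is the double-counting of the vertex factors.
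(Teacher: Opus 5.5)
Your proof is correct and is the paper's argument: the paper also writes the switched sign of the cycle as the original sign multiplied by $\chi(v_1)^2\cdots\chi(v_k)^2=1$. Your remark that the factor $\chi(u)\chi(v)$ does not depend on the orientation of the arc spells out the semi-directed case, which the paper covers only with ``a similar proof holds.''
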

{\bf Proof}. Let $S$ be sidigraph of order $n\ge 2$ and let it has a cycle $\overrightarrow{C}_k$ of order $k$ with cyclic vertices being $v_1,v_2,\dots,v_k$, where $2\le k\le n$. Then for any switching function $\chi$ on $S$,  $\text{sgn}_{S^{\chi}}(\overrightarrow{C}_k)=\text{sgn}_{S}(\overrightarrow{C}_k)(\chi(v_1))^2(\chi(v_2))^2\dots(\chi(v_k))^2=\text{sgn}_{S}(\overrightarrow{C}_k)$, where $\text{sgn}_{S}(\overrightarrow{C}_k)$ stands for sign of $\overrightarrow{C}_k$ in $S$. A similar proof holds for semi directed cycle.\qed

Recall that a signed digraph is said to be empty or discrete if its has no arcs. The following result follows from singular value decomposition of a matrix.
\begin{lemma} \cite{bm} A signed digraph $S$ of order $n$ is discrete if and only if $\mathcal{S}(S)=\{0^{(n)}\}$.
\end{lemma}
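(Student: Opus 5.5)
The plan is to prove both directions through the Frobenius norm of the adjacency matrix. Let $A=A(S)$ be the adjacency matrix of $S$. Recall that the singular values of $S$ are the nonnegative square roots of the eigenvalues of the positive semidefinite matrix $AA^T$.

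First I compute the trace of $AA^T$:
$$\operatorname{tr}(AA^T)=\sum_{i,j}a_{ij}^2 .$$
Each entry $a_{ij}$ lies in $\{-1,0,+1\}$. Hence $\sum_{i=1}^n\sigma_i(S)^2=\operatorname{tr}(AA^T)$ equals the number of arcs of $S$, that is, the number of nonzero entries of $A$.

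For the ``if'' direction, suppose $S$ is discrete. Then $A$ is the zero matrix, so $AA^T=0$ and all $n$ eigenvalues of $AA^T$ are $0$. Therefore every singular value is $0$, giving $\mathcal{S}(S)=\{0^{(n)}\}$.

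For the ``only if'' direction, suppose $\mathcal{S}(S)=\{0^{(n)}\}$. Then $\operatorname{tr}(AA^T)=\sum_i\sigma_i(S)^2=0$, so $\sum_{i,j}a_{ij}^2=0$. This forces every $a_{ij}=0$, which means $S$ has no arcs and is discrete. As an alternative, one can cite the singular value decomposition $A=U\Sigma V^T$: $\Sigma=0$ immediately gives $A=0$.

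No step is a genuine obstacle here. The one point to state explicitly is the identity $\operatorname{tr}(AA^T)=\sum_i\sigma_i^2$: the trace is the sum of the eigenvalues of $AA^T$, and these eigenvalues are exactly the $\sigma_i^2$.
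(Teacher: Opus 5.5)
Your proof is correct. The paper gives no argument beyond remarking that the lemma follows from the singular value decomposition, and that is your closing alternative ($A=U\Sigma V^T$ with $\Sigma=0$ forces $A=0$).

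Your main route is slightly different. It goes through $\mathrm{tr}(AA^T)=\sum_i\sigma_i(S)^2=\sum_{i,j}a_{ij}^2=m$, which is the Frobenius-norm identity the paper itself uses in the proof of Theorem 4.3. What it gives you is the quantitative fact that the squared singular values sum to the number of arcs, so the lemma becomes the case $m=0$. The SVD route is a one-line structural observation that needs no computation. Both arguments work for arbitrary real matrices, so neither is more general.

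One cosmetic point: your labels ``if'' and ``only if'' are swapped relative to the statement. Discreteness $\Rightarrow$ $\mathcal{S}(S)=\{0^{(n)}\}$ is the ``only if'' part. Since you prove both implications, nothing is lost.
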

\begin{lemma}\cite{htw}Two signed graphs $\Sigma_1$ and $\Sigma_2$ with the same underlying graph are switching equivalent if and only if there exist signature matrix $M$ such that $M^{-1}A(\Sigma_1)M=A(\Sigma_2)$, where $A(\Sigma_i)$, $i=1,2$ is the adjacency matrix of $\Sigma_i$.
\end{lemma}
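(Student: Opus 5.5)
The plan is to prove both implications by a direct entrywise comparison of matrices. The key identity is the observation already made in the introduction: a switching function $\chi:\mathcal{V}\rightarrow\{-1,+1\}$ corresponds exactly to a signature matrix $M$ with $M_{ii}=\chi(v_i)$. Throughout, "switching equivalent" is read in the sense of the introduction: $\Sigma_2$ is obtained from $\Sigma_1$ by switching with respect to some $Y\subset\mathcal{V}$, equivalently by some switching function $\chi$. Since $\Sigma_1$ and $\Sigma_2$ share the underlying graph $G$, their adjacency matrices have the same zero pattern. Only the signs on the edges of $G$ need to be compared.

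For the forward direction, suppose $\Sigma_2=\Sigma_1^{\chi}$, so that $\sigma_2(v_iv_j)=\chi(v_i)\sigma_1(v_iv_j)\chi(v_j)$ for every edge $v_iv_j$. Let $M=\mathrm{diag}(\chi(v_1),\dots,\chi(v_n))$. Because $M$ is diagonal with entries $\pm1$, we have $M^{-1}=M$ and
\[
(M^{-1}A(\Sigma_1)M)_{ij}=\chi(v_i)\,a_{ij}(\Sigma_1)\,\chi(v_j).
\]
This is $0$ when $v_iv_j\notin E(G)$ and $\sigma_2(v_iv_j)$ otherwise, so it equals $a_{ij}(\Sigma_2)$.

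For the converse, suppose $M$ is a signature matrix with $M^{-1}A(\Sigma_1)M=A(\Sigma_2)$. Define $\chi(v_i)=M_{ii}$, equivalently $Y=\{v_i: M_{ii}=-1\}$. The same entrywise computation, read backwards, gives $\sigma_2(v_iv_j)=\chi(v_i)\sigma_1(v_iv_j)\chi(v_j)$ for every edge. Hence $\Sigma_2=\Sigma_1^{\chi}=\Sigma_1^{Y}$. To confirm this is the set-switching of the introduction, note that $\chi(v_i)\chi(v_j)=-1$ exactly when one endpoint lies in $Y$ and the other does not. So precisely the edges between $Y$ and $\mathcal{V}\setminus Y$ change sign.

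I do not expect a real obstacle here; the argument is bookkeeping. The one point to state carefully is the hypothesis of a common underlying graph. It guarantees that conjugation by $M$, which only changes signs of entries, maps the support of $A(\Sigma_1)$ onto the support of $A(\Sigma_2)$. Without it the matrix identity could not hold, and the two notions of equivalence would not be comparable at all.
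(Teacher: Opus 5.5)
Your proof is correct: the entrywise identity $(M^{-1}A(\Sigma_1)M)_{ij}=\chi(v_i)\,a_{ij}(\Sigma_1)\,\chi(v_j)$, read in both directions with $M_{ii}=\chi(v_i)$, is all that is needed. The paper gives no proof of this lemma, since it is quoted from \cite{htw}; its only supporting remark is the observation in the introduction that $A^{\chi}=M^{-1}AM$ for the signature matrix with $M_{ii}=\chi(v_i)$, which is exactly the correspondence your argument makes precise.
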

\begin{lemma}\cite{a} A signed graph $\Sigma=(G,\sigma)$ is balanced if and only if $\Sigma$ and $G$ have same adjacency eigenvalues including multiplicities.
\end{lemma}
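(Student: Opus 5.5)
The statement is a cited result (Acharya's theorem): a signed graph $\Sigma=(G,\sigma)$ is balanced if and only if $A(\Sigma)$ and $A(G)$ have the same eigenvalues, with multiplicities. I would prove the two directions separately. The forward direction goes through switching. The reverse direction is a spectral comparison that uses Perron--Frobenius on the connected components, and that comparison is the main obstacle.

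\emph{Balanced implies cospectral.} By Harary's characterization, if $\Sigma$ is balanced then $\mathcal{V}=V_1\cup V_2$ with every negative edge running between $V_1$ and $V_2$ and every positive edge lying inside a part. One can also build the bipartition directly: fix a root in each component and let $\chi(v)$ be the sign of any path from the root to $v$. This is well defined exactly because every cycle is positive. Let $M$ be the signature matrix with $M_{ii}=\chi(v_i)$. Then $M^{-1}A(\Sigma)M=A(G)$, so the two matrices are similar and have the same spectrum. By the lemma of \cite{htw}, this says precisely that $\Sigma$ is switching equivalent to the all-positive signing.

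\emph{Cospectral implies balanced.} It suffices to treat each connected component, so assume first that $G$ is connected. Since the spectra agree with multiplicity, the largest eigenvalues agree: $\lambda_1(A(\Sigma))=\lambda_1(A(G))=\rho$. Take a unit eigenvector $x$ of $A(\Sigma)$ for $\lambda_1(A(\Sigma))$. Then
\[
\rho=x^TA(\Sigma)x\le |x|^TA(G)|x|\le\rho .
\]
Equality in the second inequality forces $|x|$ to be a Perron vector of $A(G)$, which is strictly positive because $G$ is connected. Equality in the first inequality forces $\sigma(uv)x_ux_v=|x_u||x_v|$ for every edge $uv$, i.e.\ $\sigma(uv)=\operatorname{sgn}(x_u)\operatorname{sgn}(x_v)$. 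Set $\chi=\operatorname{sgn}(x)$, which is defined at every vertex since $x$ has no zero entries. Then $\sigma$ is the switching of the all-positive signing by $\chi$. By the Proposition above, switching preserves cycle signs, so every cycle is positive and $\Sigma$ is balanced.

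\emph{The disconnected case.} Here the difficulty is that matching spectral radii does not directly pin down each component. I would handle it in one of two ways.

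\begin{itemize}
\item Use induction on the components. Switching the balanced components to all-positive, and noting that each unbalanced component $H$ satisfies $\lambda_1(\Sigma_H)<\lambda_1(H)$ by the strict form of the argument above, an eigenvalue-counting argument should show that no unbalanced component can occur.
\item Alternatively, and more cleanly, compare traces. Equal spectra give $\operatorname{tr}A(\Sigma)^k=\operatorname{tr}A(G)^k$ for all $k$. The entry $(A(\Sigma)^k)_{vv}$ is the signed count of closed walks at $v$, which is at most the unsigned count. Take $k$ to be the length of a shortest negative cycle $C$. Every closed walk of length $k$ is then either tree-like or wraps exactly once around a cycle of length $k$. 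The walks around $C$ contribute $2k$ to the unsigned total and $-2k$ to the signed total. Balancing this deficit against the other closed walks of length $k$ should yield $\operatorname{tr}A(\Sigma)^k<\operatorname{tr}A(G)^k$, a contradiction.
\end{itemize}

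In the trace argument, the step I expect to be delicate is confirming that no closed walk of length $k$ other than those around $C$ itself carries a negative sign. This should follow from the minimality of $k$, since any such walk with negative sign would have to traverse a negative cycle of length less than $k$.
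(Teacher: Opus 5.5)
The paper gives no proof of this lemma; it is only cited, so your argument has to stand on its own. Your forward direction is correct: you switch by the signs of root paths and get $M^{-1}A(\Sigma)M=A(G)$. Your connected-case converse via the Perron vector is also correct. It even gives the sharper fact that, for connected $G$, the single equality $\lambda_1(\Sigma)=\lambda_1(G)$ already forces balance.

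The disconnected case, however, is left unfinished: both bullets end in ``should''. The first bullet closes in one line and needs no induction. By the forward direction, each balanced component is cospectral with its underlying graph, so its eigenvalues cancel from both multisets. Suppose unbalanced components $\Sigma_1,\dots,\Sigma_r$ with $r\ge 1$ remain. The largest element of the signed multiset is then $\max_i\lambda_1(\Sigma_i)$. The largest element of the unsigned multiset is $\max_i\lambda_1(G_i)$. The strict inequality $\lambda_1(\Sigma_i)<\lambda_1(G_i)$ from your connected case makes the first strictly smaller than the second, which contradicts equality of the multisets.

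In the second bullet, your claim that every closed walk of length $k$ is either tree-like or wraps once around a $k$-cycle is false. Take a triangle $abc$ with a pendant edge $ad$: the walk $a\to b\to c\to a\to d\to a$ has length $5$ and is neither. You need neither this claim, nor the minimality of $k$, nor the step you flagged as delicate. Let $W$ run over the closed walks $v_0v_1\cdots v_k=v_0$ of length $k$ in $G$. Let $\operatorname{sgn}(W)$ be the product of the signs of the traversed edges, counted with multiplicity. Then
\[
\operatorname{tr}A(G)^k-\operatorname{tr}A(\Sigma)^k=\sum_{W}\bigl(1-\operatorname{sgn}(W)\bigr).
\]
Every term is nonnegative. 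The $2k$ traversals of any negative cycle of length $k$ contribute $2$ each, so the difference is at least $4k>0$. This two-line argument proves the whole converse for arbitrary $G$, connected or not, and makes the Perron--Frobenius step unnecessary. What the Perron--Frobenius route buys in exchange is the stronger connected-case criterion that uses only $\lambda_1$.
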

\section{\bf Invariance of singular values of a digraph under signing}\label{sec2}
We begin this section with examples which also provide motivation for our results.
\begin{example}
\upshape Let $A$ denote the adjacency matrix of the signed directed path $(\overrightarrow{P}_n,\sigma)$. Then
\[
AA^T = 
\begin{bmatrix}
	I_{n-1} & {\bf 0}_{(n-1)\times 1} \\
	{\bf 0}_{1 \times (n-1)} & 0_{1 \times 1}
\end{bmatrix}.
\]
Consequently, $\mathcal{S}(\overrightarrow{P}_n,\sigma)=\{1^{(n-1)},0\}$.
\end{example}
\begin{example}
\upshape Let $A$ be the adjacency matrix of a signed directed cycle $(\overrightarrow{C}_n, \sigma)$. Then $AA^T=I_n$. Consequently, $\mathcal{S}(\overrightarrow{C}_n, \sigma) =\{1^{(n)}\}$, irrespective of sign of $\overrightarrow{C}_n$.
\end{example}
By $\widehat{C}_n$ we denote the sink source semidirecetd cycle of even order $n$. For example $\widehat{C}_{10}$ is shown in Figure 1. 
\begin{example}
\upshape Let $\widehat{C}_n$ be the sink source cycle of order $n$. Then by Lemma $2.2$, $n$ is even say $n=2k$, where $k\ge 2$. We label the source vertices by {$1,2,\dots,k$} and sink vertices by {$k+1,k+2,\dots, 2k$}. Let $A$ be the adjacency matrix of $\widehat{C}_n$. Then for $k=2$
\[
	AA^T = 
	\begin{bmatrix}
		{B_2} & {\bf 0}_{2\times 2} \\
		{\bf 0}_{2\times 2} & {\bf 0}_{2\times 2}
	\end{bmatrix}, 
		\]
where $B_2=\text{Circ}~[2~~ 2]$. Clearly, $\mathcal{S}(\widehat{C}_4)=\{0^{(3)}, 2\}$.\\
For $k=3$, we see that 
\[
AA^T = 
\begin{bmatrix}
	{B_3} & {\bf 0}_{3\times 3} \\
	{\bf 0}_{3\times 3} & {\bf 0}_{3\times 3}
\end{bmatrix},
\]
where $B_3=\text{Circ}~[2~~ 1~~ 1]$. Using Lemma $2.3$, it is easy to see that $\mathcal{S}(\widehat{C}_6)=\{0^{(3)}, 1^{(2)}, 2\}$. \\ 
For $k\ge 4$, we see
\[
AA^T = 
\begin{bmatrix}
	{B_k} & {\bf 0}_{k\times k} \\
	{\bf 0}_{k\times k} & {\bf 0}_{k\times k}
\end{bmatrix},
\]
where $B_k=\text{Circ}~[2~~ 1~~0,~~\dots,~~0,~~1]$ is circulant matrix of order $k$. \\
By Lemma $2.3$, the eigenvalues of $B_k$ are $$\lambda_{j}(B_k)=2+\omega^j+\omega^{-j}=2+2\cos\left(\frac{2\pi j}{k}\right)=4\cos^2\left(\frac{\pi j}{k}\right),$$
where $\omega=\exp(\frac{2\pi i}{k})$, $i=\sqrt{-1}$ and $j=0,1,2,\dots,k-1$.\\
Consequently, $\mathcal{S}(\widehat{C}_{2k})=\{0^{(k)}, 2|\cos(\frac{\pi j}{k})|$, $j=0,1,2,\dots,k-1\}$.
\end{example}
\begin{example}
\upshape Let $\widehat{C}_n^{-}$ denote the sink source signed cycle obtained from $\widehat{C}_n$ by making the arc $(1,k+1)$ negative. Let $A$ be the adjacency matrix of $\widehat{C}_n^{-}$. Then 
For $k=2$, we have 
\[
AA^T = 
\begin{bmatrix}
	{2I_2} & {\bf 0}_{2\times 2} \\
	{\bf 0}_{2\times 2} & {\bf 0}_{2\times 2}
\end{bmatrix}. 
\]
Therefore, $\mathcal{S}(\widehat{C}_4^{-})=\{0^{(2)}, \sqrt{2}^{(2)}\}$.\\
For $k\ge 3$, it is easy to verify that 
\[
AA^T = 
\begin{bmatrix}
	{B_k^-} & {\bf 0}_{k\times k} \\
	{\bf 0}_{k\times k} & {\bf 0}_{k\times k}
\end{bmatrix}, 
\]
where $B_k^-=2I_k+A(C_k^-)$, with $A(C_k^-)$ being the adjacency matrix of negative undirected signed cycle of order $k$ with exactly one negative edge. The eigenvalues \cite[Lemma 2.6]{bpeq} of $A(C_k^-)$ are \\$2\cos\left(\frac{(2j+1)\pi}{k}\right)$, where $j=0,1,2,\dots, k-1$. The eigenvalues of $B_k^-$ are $2+2\cos\left(\frac{(2j+1)\pi}{k}\right)=4\cos^2\left(\frac{(2j+1)\pi}{2k}\right)$, $j=0,1,2,\dots,k-1$.\\
Consequently, $\mathcal{S}(\widehat{C}_{2k}^{-})=\{0^{(k)}, 2\big|\cos\left(\frac{(2j+1)\pi}{2k}\right)\big|$, $j=0,1,2,\dots,k-1\}$.
\end{example}
From examples $3.3$ and $3.4$, we see that $\widehat{C}_n$ and $\widehat{C}_n^{-}$ do not have the same singular spectrum, because $2$ is a singular value of $\widehat{C}_{2k}^{+}$ but $2$ is not a singular value of $\widehat{C}_{2k}^{-}$ as $0<\frac{(2j+1)\pi}{2k}<\pi$ for $j=0,1,2,\dots,k-1$ but $\cos{\frac{(2j+2)\pi}{2k}}\in(-1,1)$.\\
Next we recall from \cite{bhm} the definition of the bipartite support graph $H(D)$ associated to digraph $D$.

\begin{definition}
Let $D$ be a digraph of order $n$ and let $\mathcal{V}=\{v_1, v_2, \dots, v_n\}$ be the vertex set of $D$. The bipartite support graph of $D$ denoted by $H(D)$ is the graph with vertex set $\mathcal{V}_H=\mathcal{V}\cup\mathcal{V}^{\prime}$, where $\mathcal{V}^{\prime}=\{v_1^{\prime}, v_2^{\prime}, \dots, v_n^{\prime}\}$. The adjacency in $H(D)$ is defined as $v_iv_j^{\prime}$ is an arc in $H(D)$ if and only if $v_iv_j$ is an arc in $D$.
\end{definition}
It is clear from the definition that if $D$ has $n$ vertices and $m$ arcs, the $H(D)$ has $2n$ vertices and $m$ edges.\\ We associate a bipartite signed graph to a signed digraph as follows.
\begin{definition}
Let $S=(D,\sigma)$ be a signed digraph with $n$ vertices. Then the bipartite support signed graph of $S$, which we denote by $H(S)$ is the bipartite signed graph whose underlying bipartite graph is $H(D)$ and sign of an edge $\sigma(v_iv_j^{\prime})=\sigma(v_iv_j)$. We write this by $H(S)=(H(D),\sigma)$.
\end{definition}
\begin{example}
Consider the signed digraphs $S_1$ and $S_2$ shown in Figure 2. Their bipartite support signed graphs are $H(S_1)$ and $H(S_2)$. The solid arrows/edges represent positive arcs/edges and the dotted arrows/edges denote negative arcs/edges. 
\end{example}
\begin{figure}
	\centering
	\begin{tikzpicture}[ >=stealth, ->, every node/.style={circle,draw,inner sep=1.5pt} ] \node (1) at (0,1) {1}; \node (6) at (1.2,1) {6}; \node (2) at (2.4,1) {2}; \node (7) at (3.6,1) {7}; \node (3) at (4.8,1) {3}; \node (8) at (4.8,0) {8}; \node (4) at (3.6,-1){4}; \node (9) at (2.4,-1){9}; \node (5) at (1.2,-1){5}; \node (10) at (0,-1) {10}; \draw (1)--(6); \draw (2)--(6); \draw (2)--(7); \draw (3)--(7); \draw (3)--(8); \draw (4)--(8); \draw (4)--(9); \draw (5)--(9); \draw (5)--(10); \draw (1)--(10); \end{tikzpicture}
	\caption{Semidirected cycle $\widehat{C_{10}}$}
\end{figure}
\begin{figure}[htbp]
	\centering
	\begin{tikzpicture}[
		vertex/.style={circle, draw,  inner sep=0pt, minimum size=4mm, font=\small},
		edge/.style={-{Stealth[scale=1.2]}, thick, shorten >=1pt},
		uedge/.style={thick} % Undirected edge style
		]
		
		% ==========================================
		% GRAPH S_1 (Top-Left)
		% ==========================================
		\begin{scope}[shift={(0,0)}]
			% Triangle vertices (v2 at the top)
			\node[vertex] (v2) at (0, 2)     {$v_2$};
			\node[vertex] (v3) at (1.73, 0)  {$v_3$};
			\node[vertex] (v1) at (-1.73, 0) {$v_1$};
			
			% Incoming vertices to v2
			\node[vertex] (v4) at (-1.5, 3.2) {$v_4$};
			\node[vertex] (v5) at (1.5, 3.2)  {$v_5$};
			
			% Directed edges (v2 -> v3 is dotted)
			\draw[edge] (v1) -- (v2);
			\draw[edge, dotted] (v2) -- (v3);
			\draw[edge] (v3) -- (v1);
			\draw[edge] (v4) -- (v2);
			\draw[edge] (v5) -- (v2);
			
			% Label for S_1
			\node at (0, -0.9) {$S_1$};
		\end{scope}
		
		% ==========================================
		% GRAPH S_2 (Top-Right: 4-cycle / square)
		% ==========================================
		\begin{scope}[shift={(7.5, 0.5)}]
			% Vertices arranged cyclically in a square
			\node[vertex] (u1) at (0, 2) {$v_1$};
			\node[vertex] (u2) at (2, 2) {$v_2$};
			\node[vertex] (u3) at (2, 0) {$v_3$};
			\node[vertex] (u4) at (0, 0) {$v_4$};
			
			% Directed perimeter edges
			\draw[edge] (u1) -- (u2);
			\draw[edge, dotted] (u1) -- (u4);
			\draw[edge, dotted] (u3) -- (u2);
			\draw[edge] (u3) -- (u4);
			
			% Label for S_2
			\node at (1, -1.4) {$S_2$};
		\end{scope}
		
		% ==========================================
		% GRAPH H(S_1) (Bottom-Left: Undirected)
		% ==========================================
		\begin{scope}[shift={(-0.5, -6.5)}]
			% Left column: unprimed vertices v_1 to v_5
			\node[vertex] (h1_v1) at (0, 4) {$v_1$};
			\node[vertex] (h1_v2) at (0, 3) {$v_2$};
			\node[vertex] (h1_v3) at (0, 2) {$v_3$};
			\node[vertex] (h1_v4) at (0, 1) {$v_4$};
			\node[vertex] (h1_v5) at (0, 0) {$v_5$};
			
			% Right column: primed vertices v_1' to v_5'
			\node[vertex] (h1_pv1) at (3, 4) {$v_1'$};
			\node[vertex] (h1_pv2) at (3, 3) {$v_2'$};
			\node[vertex] (h1_pv3) at (3, 2) {$v_3'$};
			\node[vertex] (h1_pv4) at (3, 1) {$v_4'$};
			\node[vertex] (h1_pv5) at (3, 0) {$v_5'$};
			
			% Undirected edges between unprimed and primed vertices
			\draw[uedge] (h1_v1) -- (h1_pv2);
			\draw[uedge, dotted] (h1_v2) -- (h1_pv3);
			\draw[uedge] (h1_v3) -- (h1_pv1);
			\draw[uedge] (h1_v4) -- (h1_pv2);
			\draw[uedge] (h1_v5) -- (h1_pv2);
			
			% Label for H(S_1)
			\node at (1.5, -1) {$H(S_1)$};
		\end{scope}
		
		% ==========================================
		% GRAPH H(S_2) (Bottom-Right: Undirected)
		% ==========================================
		\begin{scope}[shift={(7, -6)}]
			% Left column: unprimed vertices v_1 to v_4
			\node[vertex] (h2_v1) at (0, 3) {$v_1$};
			\node[vertex] (h2_v2) at (0, 2) {$v_2$};
			\node[vertex] (h2_v3) at (0, 1) {$v_3$};
			\node[vertex] (h2_v4) at (0, 0) {$v_4$};
			
			% Right column: primed vertices v_1' to v_4'
			\node[vertex] (h2_pv1) at (3, 3) {$v_1'$};
			\node[vertex] (h2_pv2) at (3, 2) {$v_2'$};
			\node[vertex] (h2_pv3) at (3, 1) {$v_3'$};
			\node[vertex] (h2_pv4) at (3, 0) {$v_4'$};
			
			% Undirected edges in H(S_2)
			\draw[uedge] (h2_v1) -- (h2_pv2);
			\draw[uedge, dotted] (h2_v1) -- (h2_pv4);
			\draw[uedge, dotted] (h2_v3) -- (h2_pv2);
			\draw[uedge] (h2_v3) -- (h2_pv4);
			
			% Isolated vertices: v_2, v_4, v_1', and v_3'
			
			% Label for H(S_2)
			\node at (1.5, -1.5) {$H(S_2)$};
		\end{scope}
		
	\end{tikzpicture}
	\caption{The signed digraphs $S_1, S_2$ and their associated bipartite signed graphs $H(S_1), H(S_2).$}
	\label{fig:digraphs_and_bipartite_graphs}
\end{figure}

\begin{lemma}
Let $S=(D,\sigma)$ be a signed digraph of order $n$. Then $\sigma_1, \sigma_2, \sigma_3, \dots, \sigma_n$ are the singular values of $S$ if and only if  $\{\pm\sigma_1,\pm\sigma_2,\pm\sigma_3,\dots, \pm\sigma_n\}$ are the eigenvalues of $H(S)=(H(D),\sigma)$.
\end{lemma}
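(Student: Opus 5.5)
Order the vertices of $H(S)$ as $v_1,\dots,v_n,v_1',\dots,v_n'$. By the definition of $H(S)$, the edge $v_iv_j'$ is present with sign $\sigma(v_iv_j)$ exactly when $v_iv_j$ is an arc of $S$. There are no edges inside $\mathcal{V}$ or inside $\mathcal{V}'$. So the adjacency matrix of $H(S)$ has the block form
\[
A(H(S))=\begin{bmatrix} {\bf 0}_{n\times n} & A \\ A^T & {\bf 0}_{n\times n}\end{bmatrix},
\]
where $A=A(S)$. The plan is to compute the spectrum of this symmetric $2n\times 2n$ matrix in terms of the singular values of $A$. The statement then follows in both directions, because the multiset $\{\sigma_1,\dots,\sigma_n\}$ of nonnegative reals is recovered uniquely from the multiset $\{\pm\sigma_1,\dots,\pm\sigma_n\}$: take the nonnegative members, counting $0$ with half its multiplicity.

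First I compute the square:
\[
A(H(S))^2=\begin{bmatrix} AA^T & {\bf 0}\\ {\bf 0} & A^TA\end{bmatrix}.
\]
Since $AA^T$ and $A^TA$ have the same eigenvalues $\sigma_1^2,\dots,\sigma_n^2$ with multiplicities (both are $n\times n$), the eigenvalues of $A(H(S))^2$ are $\sigma_i^2$, each appearing twice. Because $A(H(S))$ is real symmetric, its eigenvalues $\lambda$ are real and $\lambda^2$ runs over this list. This alone only determines each $|\lambda|$; the signs still have to be fixed.

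Second, I use the bipartite structure to fix the signs. Conjugating by the signature matrix $\mathrm{diag}(I_n,-I_n)$ sends $A(H(S))$ to $-A(H(S))$, so the spectrum is symmetric about $0$ with multiplicities (this is also Lemma 2.4, since $H(S)$ is a bipartite signed graph). To pair things explicitly, I take the SVD $A=U\Sigma V^T$ with orthonormal columns $u_i,v_i$. Then $A v_i=\sigma_i u_i$ and $A^Tu_i=\sigma_i v_i$, so the vectors $(u_i,\pm v_i)^T$ are eigenvectors with eigenvalues $\pm\sigma_i$. These $2n$ vectors are mutually orthogonal: for $j\ne i$ orthogonality comes from orthonormality of the $u$'s and $v$'s, and for the pair with the same $i$ one gets $\|u_i\|^2-\|v_i\|^2=0$. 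Hence they form an orthogonal eigenbasis, and the spectrum of $A(H(S))$ is exactly $\{\pm\sigma_1,\dots,\pm\sigma_n\}$ with multiplicities. This includes zero singular values, which contribute $0$ twice.

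The main obstacle is the multiplicity bookkeeping, especially at the eigenvalue $0$ and for repeated singular values. The explicit orthogonal eigenbasis from the full SVD, which exists for square $A$, settles this cleanly, so I would use that route rather than arguing through the characteristic polynomial $\det(\lambda^2 I-AA^T)$. That determinant identity could serve as a cross-check.
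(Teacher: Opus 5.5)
Your proposal is correct and follows the paper's proof. The block form of $A(H(S))$, the identity $A(H(S))^2=\mathrm{diag}(AA^T,A^TA)$, and the symmetry of the spectrum of the bipartite signed graph $H(S)$ (Lemma 2.4) are exactly the paper's ingredients, and your multiset-recovery argument for the converse does the same job as the paper's appeal to (3.1). The explicit SVD eigenbasis $(u_i,\pm v_i)$ is a valid addition that makes the multiplicity count transparent. It is not strictly needed, since squaring plus spectral symmetry already pins down all multiplicities, including that of $0$.
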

{\bf Proof.} Let $v_1,v_2,\dots,v_n$ be the $n$ vertices of $S$. Then by definition $H(S)$ has $2n$ vertices say $v_1,v_2,\dots,v_n, v_1^{\prime},v_2^{\prime},\dots, v_n^{\prime}$ and $v_iv_j^{\prime}$ is an edge in $H(S)$ if and only if $v_iv_j$ is an arc in $S$ and both share the same sign. With suitable labelling, the adjacency matrix of $H(S)$ can be written as
\[
A_H = A(H(S))= 
\begin{bmatrix}
	{\bf 0}_{n\times n} & A(S) \\
	{A^T(S)} & {\bf 0}_{n\times n}
\end{bmatrix}, 
\]
so that 

\begin{equation}
A_H{A^T_H} = A^2_H= 
\begin{bmatrix}
	A(S){A^T(S)} & {\bf 0}_{n\times n} \\
	{\bf 0}_{n\times n} & {A^T(S)}A(S)
\end{bmatrix},
\end{equation} 

Let $\sigma_1, \sigma_2, \sigma_3, \dots, \sigma_n$ are the singular values of $S$, then the eigenvalues of $A_H{A^T_H}={A^2_H}$ are  $\{\sigma_1^2,\sigma_1^2,\sigma_2^2,\sigma_2^2,\dots,\sigma_n^2,\sigma_n^2\}$ and since $H(S)$ is bipartite by Lemma $2.4$, the eigenvalues of $H(S)$ are $\{\pm\sigma_1,\pm\sigma_2,\dots,\pm\sigma_n\}$.\\
Conversely, suppose that the eigenvalues of $H(S)=(H(D),\sigma)$ are $\{\pm\sigma_1,\pm\sigma_2,\pm\sigma_3,\dots, \pm\sigma_n\}$. By $(3.1)$, we see that singular values of $S$ must be $\sigma_1, \sigma_2, \sigma_3, \dots, \sigma_n$.\qed

Let $\tau$ and $\sigma$ be two signatures on a digraph $D$ of order $n$  let $A_{\tau}$ and $A_{\sigma}$ be the adjacency matrices of associated signed digraphs respectively. The next result relates the $A_{\tau}$ and $A_{\sigma}$ by means of signature matrices.
\begin{theorem}
Let $D$ be a digraph of order $n$. Then for signatures $\tau$ and $\sigma$ on $D$ with respective adjacency matrices $A_{\tau}$ and $A_{\sigma}$ there exists signature matrices $R$ and $C$ of order $n$ such that $RA_{\tau}C=A_{\sigma}$ if and only if the associated bipartite support signed graphs $(H(D),\tau)$ and $(H(D),\sigma)$ are switching equivalent. 
\end{theorem}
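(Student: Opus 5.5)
The plan is to work directly with block matrices, using the form of the adjacency matrix of $H(S)$ from the proof of Lemma 3.7 together with Lemma 2.7. With the labelling $v_1,\dots,v_n,v_1^{\prime},\dots,v_n^{\prime}$ we have
\[
A(H(D),\tau)=\begin{bmatrix}{\bf 0} & A_{\tau}\\ A_{\tau}^T & {\bf 0}\end{bmatrix},\qquad
A(H(D),\sigma)=\begin{bmatrix}{\bf 0} & A_{\sigma}\\ A_{\sigma}^T & {\bf 0}\end{bmatrix}.
\]
By Lemma 2.7, the two bipartite signed graphs (which share the underlying graph $H(D)$) are switching equivalent if and only if there is a signature matrix $M$ of order $2n$ with $M^{-1}A(H(D),\tau)M=A(H(D),\sigma)$.

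For the forward direction, suppose $RA_{\tau}C=A_{\sigma}$ with $R,C$ signature matrices of order $n$. Put $M=\mathrm{diag}(R,C)$. This is a signature matrix of order $2n$ with $M=M^{-1}=M^T$. A direct block multiplication gives
\[
M^{-1}A(H(D),\tau)M=\begin{bmatrix}{\bf 0} & RA_{\tau}C\\ CA_{\tau}^TR & {\bf 0}\end{bmatrix}.
\]
Its upper-right block is $A_{\sigma}$. Its lower-left block is $(RA_{\tau}C)^T=A_{\sigma}^T$, because $R$ and $C$ are diagonal and hence symmetric. Lemma 2.7 then gives switching equivalence. Combinatorially, $M$ is the switching function $\chi(v_i)=R_{ii}$, $\chi(v_j^{\prime})=C_{jj}$.

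For the converse, switching equivalence gives, via Lemma 2.7, a signature matrix $M$ of order $2n$ with $M^{-1}A(H(D),\tau)M=A(H(D),\sigma)$. Since $M$ is diagonal, we may write $M=\mathrm{diag}(R,C)$ according to the partition $\mathcal{V}\cup\mathcal{V}^{\prime}$, where $R$ and $C$ are signature matrices of order $n$. The block computation above then reads $\begin{bmatrix}{\bf 0} & RA_{\tau}C\\ CA_{\tau}^TR & {\bf 0}\end{bmatrix}=\begin{bmatrix}{\bf 0} & A_{\sigma}\\ A_{\sigma}^T & {\bf 0}\end{bmatrix}$, and comparing upper-right blocks yields $RA_{\tau}C=A_{\sigma}$.

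The argument is mostly routine. The one point needing care is to use the same vertex ordering for both signed graphs, so that Lemma 2.7 applies with a common $M$. It also matters that $M$ is diagonal, so that it splits into blocks $R$ and $C$ and cannot mix $\mathcal{V}$ with $\mathcal{V}^{\prime}$. Finally, we use $M^{-1}=M$ and $R^{-1}=R$ to move between the forms $M^{-1}AM$ and $RAC$.
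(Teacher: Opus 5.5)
Your proposal is correct and follows the paper's proof essentially step for step. Both directions rest on the block signature matrix $\mathrm{diag}(R,C)$ and Lemma 2.7; for the converse, both split $M$ into its diagonal blocks and compare the upper-right blocks. The paper writes $R=M_1^{-1}$ where you write $R=M_1$, which is the same matrix because signature matrices are involutions.
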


{\bf Proof.} Assume first that there exists signature matrices $R$ and $C$ such that $RA_{\tau}C=A_{\sigma}$. We need to show $(H(D),\tau)$ and $(H(D),\sigma)$ are switching equivalent.\\
Note that 

\[
A(H(D),\tau)=
\begin{bmatrix}
	{\bf 0}_{n\times n} &A_{\tau}\\
	{A^T_{\tau}} & {\bf 0}_{n\times n}
\end{bmatrix}
\]
and 
\[
A(H(D),\sigma)=
\begin{bmatrix}
	{\bf 0}_{n\times n} &A_{\sigma}\\
	{A^T_{\sigma}} & {\bf 0}_{n\times n}
\end{bmatrix}.
\]
As $R$ and $C$ are signature matrices so is the matrix $M$ defined as 
\[
M=\begin{bmatrix}
	R & {\bf 0}_{n\times n}\\
	{\bf 0}_{n\times n} & C
\end{bmatrix}=M^{-1}
\]
We have 
\[
\begin{aligned}
	&\begin{bmatrix}
		R & {\bf 0}_{n\times n}\\
		{\bf 0}_{n\times n} & C
	\end{bmatrix}
	\begin{bmatrix}
		{\bf 0}_{n\times n} & A_{\tau}\\
		{A^T_{\tau}} & {\bf 0}_{n\times n}
	\end{bmatrix}
	\begin{bmatrix}
		R & {\bf 0}_{n\times n}\\
		{\bf 0}_{n\times n} & C
	\end{bmatrix}
	\\
	&\quad =
	\begin{bmatrix}
		{\bf 0}_{n\times n} & RA_{\tau}\\
		C{A^T_{\tau}} & {\bf 0}_{n\times n}
	\end{bmatrix}
	\begin{bmatrix}
		R & {\bf 0}_{n\times n}\\
		{\bf 0}_{n\times n} & C
	\end{bmatrix}
	\\
	&\quad =
	\begin{bmatrix}
		{\bf 0}_{n\times n} & RA_{\tau}C\\
		C{A^T_{\tau}}R & {\bf 0}_{n\times n}
	\end{bmatrix}
	\\
	&\quad =
	\begin{bmatrix}
		{\bf 0}_{n\times n} & A_{\sigma}\\
		{A^T_{\sigma}} & {\bf 0}_{n\times n}
	\end{bmatrix}.
\end{aligned}
\]
Therefore the matrices \[
\begin{bmatrix}
	{\bf 0}_{n\times n} & A_{\tau}\\
	{A^T_{\tau}} & {\bf 0}_{n\times n}
\end{bmatrix}
\text{and} 
\begin{bmatrix}
	{\bf 0}_{n\times n} & A_{\sigma}\\
	{A^T_{\sigma}} & {\bf 0}_{n\times n}
\end{bmatrix}
\]
are signature similar. By Lemma $2.7$, we see $(H(D),\tau)$ and $(H(D),\sigma)$ are switching equivalent.\\
Conversely, suppose $(H(D),\tau)$ and $(H(D),\sigma)$ are switching equivalent. Then again by Lemma $2.7$, there exits a signature matrix say $M$ of order $2n$ such that $$M^{-1}A(H(D),\tau)M=A(H(D),\sigma).$$
Partition $M$ as \[
M=\begin{bmatrix}
	M_1 & {\bf 0}_{n\times n}\\
	{\bf 0}_{n\times n} & M_2
\end{bmatrix}
\]
Clearly $M_1$ and $M_2$ are signature matrices. We see that\\
\[
\begin{bmatrix}
	{M_1}^{-1} & {\bf 0}_{n\times n}\\
	{\bf 0}_{n\times n} & {M_2}^{-1}
\end{bmatrix}
\begin{bmatrix}
	{\bf 0}_{n\times n} & A_{\tau}\\
	{A_{\tau}}^{T} & {\bf 0}_{n\times n}
\end{bmatrix}
\begin{bmatrix}
	{M_1} & {\bf 0}_{n\times n}\\
	{\bf 0}_{n\times n} & {M_2}
\end{bmatrix}
=\begin{bmatrix}
	{\bf 0}_{n\times n} & A_{\sigma}\\
	{A^T_{\sigma}} & {\bf 0}_{n\times n}
\end{bmatrix},
\]
or \[
\begin{bmatrix}
	{\bf 0}_{n\times n} & M_1^{-1}A_{\tau}M_2\\
	M_2A^T_{\tau}M_1^{-1} & {\bf 0}_{n\times n}
\end{bmatrix}
=
\begin{bmatrix}
	{\bf 0}_{n\times n} & A_{\sigma}\\
	{A_{\sigma}}^{T} & {\bf 0}_{n\times n}
\end{bmatrix},
\]
which gives $${M_1}^{-1}A_{\tau}{M_2}=A_{\sigma}.$$
Put $R=M_1^{-1}$ and $C={M_2}$, the result follows.\qed

\begin{lemma}
Let $D$ be a digraph of order $n$. Let $\tau$ and $\sigma$ be two signatures on $D$ such that the corresponding adjacency matrices are related as $RA_{\tau}C=A_{\sigma}$, where $R$ and $C$ are signatue matrices. Then $A_{\tau}$ and $A_{\sigma}$ have same singular spectrum.
\end{lemma}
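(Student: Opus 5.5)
The plan is to show directly that $A_{\sigma}A_{\sigma}^T$ is orthogonally similar to $A_{\tau}A_{\tau}^T$. Since singular values are the nonnegative square roots of the eigenvalues of $AA^T$, equal characteristic polynomials will give equal singular spectra, multiplicities included.

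The computation uses the facts recalled in Section 1 that a signature matrix $M$ satisfies $M=M^T=M^{-1}$. From $A_{\sigma}=RA_{\tau}C$ we get
\[
A_{\sigma}A_{\sigma}^T=RA_{\tau}C\,C^TA_{\tau}^TR^T=RA_{\tau}C^2A_{\tau}^TR=RA_{\tau}A_{\tau}^TR^{-1},
\]
because $C^2=I_n$. Thus $A_{\sigma}A_{\sigma}^T$ and $A_{\tau}A_{\tau}^T$ are similar, and similar matrices have the same characteristic polynomial. Both matrices are positive semidefinite, so their eigenvalues $\sigma_i^2$ coincide with multiplicities. Taking positive square roots yields $\sigma_i(A_{\sigma})=\sigma_i(A_{\tau})$ for $i=1,\dots,n$, which is the claim.

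A second route is available through the results already proved. By Theorem 3.7, the existence of $R$ and $C$ makes $(H(D),\tau)$ and $(H(D),\sigma)$ switching equivalent, so by Lemma 2.7 their adjacency matrices are signature similar and hence cospectral. Lemma 3.6 then transfers equality of the eigenvalues of the bipartite support signed graphs to equality of the singular values of $S$.

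There is no genuine obstacle. The only point needing care is that the similarity is taken by $R$ alone, after $C$ has been eliminated using $CC^T=I_n$, and that multiplicities are preserved; similarity guarantees this. I would present the direct computation as the proof and mention the second route only as a remark.
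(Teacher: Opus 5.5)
Your direct computation is the paper's proof: it writes $A_{\sigma}A_{\sigma}^T=(RA_{\tau}C)(RA_{\tau}C)^T=R^{-1}A_{\tau}A_{\tau}^TR$ using $R=R^{-1}$ and $C=C^{-1}$, then concludes from the similarity of these two positive semidefinite matrices, so your proposal is correct. Your alternative route through the bipartite support signed graphs is also valid, although in the paper's numbering the results you cite are Theorem 3.9 and Lemma 3.8.
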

{\bf Proof.} We have 
\begin{align*}
A_{\sigma}A^T_{\sigma}&=(RA_{\tau}C)(RA_{\tau}C)^T\\
&=R^{-1}A_{\tau}A^T_{\tau}R
\end{align*}
by using $R=R^{-1}$ and $C=C^{-1}$.\\
Clearly, the matrices $A_{\sigma}A^T_{\sigma}$ and $A_{\tau}A^T_{\tau}$ are positive semidefinite and similar. So they share same eigenvalues including multiplicities and hence $A_{\tau}$ and $A_{\sigma}$ have same singular spectrum.\qed
In the next result, we characterize digraphs $D$ such that the $H(D)$ has a cycle.
\begin{theorem}
Let $D$ be a digraph of order $n$. Then for each semidirected cycle $\widehat{C}_{2k}$ for some $k\ge 2$, of $D$, there is a corresponding cycle $C_{2k}$ in associated bipartite graph $H(D)$ and conversely. Moreover, the number of semidirected cycles of the form $\widehat{C}_{2k}$ in $D$ is equal to the number of cycles in the bipartite graph $H(D)$.
\end{theorem}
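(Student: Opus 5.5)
We read $\widehat{C}_{2k}$ here as a sink--source semidirected cycle of $D$: its $2k$ vertices alternate between sources and sinks along the cycle, and every arc goes from a source to a sink, exactly as in Example 3.3. The plan is to build an explicit bijection between such cycles in $D$ and cycles of the bipartite support graph $H(D)$. Everything is driven by the adjacency rule of $H(D)$: $v_iv_j^{\prime}$ is an edge of $H(D)$ exactly when $v_iv_j$ is an arc of $D$. So every edge of $H(D)$ joins an ``out-copy'' $v_i$ to an ``in-copy'' $v_j^{\prime}$.

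\emph{From $D$ to $H(D)$.} Let $\widehat{C}_{2k}$ have vertices $u_1,w_1,u_2,w_2,\dots,u_k,w_k$ in cyclic order. Here the $u_i$ are the sources and the $w_i$ the sinks, so the arcs are $u_iw_i$ and $u_{i+1}w_i$, with indices taken mod $k$. We map it to the closed walk
\[
u_1\,w_1^{\prime}\,u_2\,w_2^{\prime}\cdots u_k\,w_k^{\prime}\,u_1
\]
in $H(D)$. Each consecutive pair is an edge, since it comes from an arc of $D$. The unprimed vertices $u_1,\dots,u_k$ are distinct, and so are the primed vertices $w_1^{\prime},\dots,w_k^{\prime}$, because the original cycle has distinct vertices. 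Since $k\ge 2$, this closed walk is a genuine cycle $C_{2k}$ of $H(D)$.

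\emph{From $H(D)$ to $D$.} $H(D)$ is bipartite with parts $\mathcal{V}$ and $\mathcal{V}^{\prime}$, so any cycle alternates and has even length $2k$. Write it as $x_1y_1^{\prime}x_2y_2^{\prime}\cdots x_ky_k^{\prime}x_1$. Reading the edges back gives arcs $x_iy_i$ and $x_{i+1}y_i$ in $D$. In this structure each $x_i$ has out-arcs only and each $y_i$ has in-arcs only, so projecting back yields an alternating sink--source pattern.

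\emph{Main obstacle: distinctness.} The $x_i$ are pairwise distinct and the $y_i$ are pairwise distinct, but a vertex $v$ could occur both as some $x_i$ and as some $y_j$, since $v$ and $v^{\prime}$ are different vertices of $H(D)$. In that case the projection is not a cycle of $D$. A genuine cycle of $D$ cannot use the same vertex both as a source and as a sink of the cycle, so this case must be excluded before the bijection can be claimed.
\begin{itemize}
\item The first thing to try is to restrict to cycles of $H(D)$ with $\{x_i\}\cap\{y_j\}=\emptyset$. Under that restriction the two maps are inverse to each other, once cycles are identified up to rotation and reflection on both sides.
\item If instead a counterexample shows up, such as $D=\overrightarrow{C}_2$ pieces combining so that $v$ and $v^{\prime}$ both lie on the cycle, then the counting claim should be read as counting images of semidirected cycles. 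In that case the statement needs this hypothesis made explicit, for instance that $D$ has no vertex lying on the projected cycle as both a source and a sink.
\end{itemize}

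\emph{The count.} With this condition in place, the maps are mutual inverses. The correspondence therefore preserves length $2k$, and the equality in the number of cycles follows.
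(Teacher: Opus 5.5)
Your proposal does not prove the statement, and no proof can: the statement is false. The obstacle you flag is exactly where it breaks, so you should settle your hedge with a counterexample rather than a conditional.

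Here is a digon-free counterexample. Let $D$ have vertices $a,b,c,d,e$ and arcs $ab,cb,ca,da,de,ae$. Its underlying graph is two triangles $abc$ and $ade$ sharing $a$, so it has no even cycle and $D$ contains no $\widehat{C}_{2k}$. Yet the six edges $ab',cb',ca',da',de',ae'$ of $H(D)$ form the $6$-cycle $a\,b'\,c\,a'\,d\,e'\,a$. In your notation this is $x_1=y_2=a$: the projection is a figure-eight through $a$, once as a source and once as a sink.

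Your suspicion about digons is also right. For $\overleftrightarrow{K_3}$, $H$ contains the $6$-cycle $a\,b'\,c\,a'\,b\,c'\,a$, while the underlying $K_3$ has no even cycle. So both the ``conversely'' part and the equality of counts fail (here $0$ cycles versus $1$).

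The paper's proof has exactly this hole. In the converse it labels the cycle of $H(D)$ as $v_1v_2'v_3v_4'\cdots v_{2k}'$ with $2k$ distinct indices. Its remark that $v_iv_i'$ is never an edge only stops $v$ and $v'$ from being consecutive on the cycle. The error propagates: Theorem 3.12 uses this result to conclude that $H(D)$ is a forest when $D$ has no $\widehat{C}_{2k}$. In the example above, making $ab$ negative changes the singular spectrum from $\{2,1^{(2)},0^{(2)}\}$ to $\{\sqrt{3}^{(2)},0^{(3)}\}$.

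What your two maps actually prove is the corrected statement, and you should state it outright. The $\widehat{C}_{2k}$'s of $D$ are in bijection with the $2k$-cycles of $H(D)$ that never contain both $v$ and $v'$ for any $v$. Count the $\widehat{C}_{2k}$'s as sink--source subdigraphs, i.e.\ arc sets: with digons, one cycle of the underlying graph can carry two such orientations, and these give two different cycles of $H(D)$.

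Your forward map and its distinctness argument are correct. Under the restriction, the projection is a genuine sink--source cycle and the two maps are mutually inverse.

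You can also note that the restriction is automatic for $k=2$. In a $4$-cycle every $x_i$ is adjacent to every $y_j'$, so $x_i=y_j$ would force a loop. Thus $4$-cycles of $H(D)$ and $\widehat{C}_4$'s correspond exactly, and the first failures occur at length $6$. Your final sentence, that the equality in the number of cycles follows, holds only for this restricted count.
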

{\bf Proof.} We first assume that $D$ has a semidirected cycle $\widehat{C}_{2k}$ for some $k\ge 2$. Let $\mathcal{V}=\{v_1,v_2,v_3,\dots v_{n-1},v_n\}$ be the vertex set of $D$. We assume $\mathcal{V}(\widehat{C}_{2k})=\{v_1,v_2,v_3,\dots,v_{k-1},v_k,v_{k+1},\\v_{k+2},\dots, v_{2k-1},v_{2k}\}$ and assume labeling of vertices of $\widehat{C}_{2k}$ as mentioned in Example $3.3$. By definition, in $H(D)$ $v_1$ is adjacent to ${v^{\prime}_{k+1}}$ and $v^{\prime}_{2k}$, $v_2$ is adjacent to ${v^{\prime}_{k+1}}$ and ${v^{\prime}_{k+2}}$, $v_3$ is adjacent to ${v^{\prime}_{k+2}}$ and ${v^{\prime}_{k+3}}$, and so on $v_k$ is adjacent to ${v^{\prime}_{2k-1}}$ and ${v^{\prime}_{2k}}$. Thus producing a cycle $C_{2k}:v_1{v^{\prime}_{k+1}}v_2{v^{\prime}_{k+2}}\dots v_k{v^{\prime}_{2k}}v_1$ in $H(D)$. It is clear that two different semidirected cycles $\widehat{C}_{2p}$ and $\widehat{C}_{2q}$ in $D$ give two different cycles $C_{2p}$ and $C_{2q}$ in $H(D)$.\\
Conversely, suppose that $H(D)$ has a cycle $C_{2k}$ of length $2k$ and let vertex set of $H(D)$ be $\mathcal{V}(H(D))=\{v_1,v_2,\dots, v_n, {v^{\prime}_{1}}, {v^{\prime}_{2}},\dots,{v^{\prime}_{n}}\}$. As $C_{2k}$ is an alternating cycle in $H(D)$, we assume that the vertex set of $C_{2k}$ be $\mathcal{V}(C_{2k})=\{v_1,{v^{\prime}_{2}}, v_3, {v^{\prime}_{4}},\dots, v_{2k-1}, {v^{\prime}_{2k}}\}$ and $C_{2k}:v_1{v^{\prime}_{2}}v_3{v^{\prime}_{4}}\dots,v_{2k-1} v^{\prime}_{2k}$ Then by definition of $H(D)$, $\widehat{C}_{2k}$ with arcs $:v_1v_2, v_1v_{2k}, v_3v_2,v_3v_4, v_5v_4,v_5v_6,\dots, v_{2k-1}v_{2k-2},v_{2k-1}v_{2k}$ is its corresponding semidirecetd cycle present in $D$. It is worth to mention that as $D$ is simple digraph, we cannot have arcs of the form $v_iv^{\prime}_i$ in $H(D)$.\\
We see that there is a bijection between the set of semidirected cycles of the form $\widehat{C}_{2k}$ in $D$ and the set of cycles of the form $C_{2k}$ in $H(D)$.\qed
We next obtain an invariance criterion for the singular values of a digraph under signing.
\begin{theorem}
Let $D$ be a digraph of order $n$. Then for any signature $\sigma$ on $D$, $D$ and the signed digraph $(D,\sigma)$ have same singular spectrum if and only if either $(D,\sigma)$ does not contain $\widehat{C}_{2k}$ type signed subdigraph or each signed subdigraph of $(D,\sigma)$ of the type $\widehat{C}_{2k}$ has positive sign.
\end{theorem}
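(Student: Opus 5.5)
The plan is to pass to the bipartite support signed graph and combine Lemma 3.7, Theorem 3.8, Lemma 3.9, Theorem 3.10 and Lemma 2.8. Lemma 3.7 says that the singular spectrum of $(D,\sigma)$ is determined by the adjacency spectrum of $H(S)=(H(D),\sigma)$, and conversely, up to the sign pairing. So $D$ and $(D,\sigma)$ have the same singular spectrum if and only if $(H(D),\sigma)$ and the all-positive $H(D)$ have the same adjacency eigenvalues with multiplicities. By Lemma 2.8, applied to the signed graph $(H(D),\sigma)$, this holds if and only if $(H(D),\sigma)$ is balanced. This reduces the theorem to showing that $(H(D),\sigma)$ is balanced if and only if every $\widehat{C}_{2k}$ in $(D,\sigma)$ is positive, where the condition holds vacuously if there are none.

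For that equivalence I will use Theorem 3.10. Every cycle of $H(D)$ is a $C_{2k}$ with $k\ge 2$, since $H(D)$ is bipartite and simple. Under the bijection of Theorem 3.10, the edges of $C_{2k}$ are $v_iv_j'$, and these correspond exactly to the arcs $v_iv_j$ of the associated $\widehat{C}_{2k}$ in $D$, with the same sign by the definition of $H(S)$. Hence the sign of each cycle of $H(D)$ equals the sign of the corresponding sink-source subdigraph. In particular, if $D$ contains no $\widehat{C}_{2k}$, then $H(D)$ is a forest and is trivially balanced. So $(H(D),\sigma)$ is balanced exactly when every such subdigraph is positive, and both directions follow.

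The main obstacle is the second step, because the correspondence in Theorem 3.10 needs care. A cycle in $H(D)$ yields a closed alternating walk in $D$: out-arc, in-arc, and so on. Since $v_i$ and $v_i'$ are distinct vertices of $H(D)$, a vertex of $D$ may appear once as a source and once as a sink. The underlying structure in $D$ may then fail to be a genuine cycle of the underlying graph; for instance, a directed $2$-cycle $uv,vu$ combined with other arcs can appear. I would handle this by \emph{interpreting} ``signed subdigraph of type $\widehat{C}_{2k}$'' as the image of an $H(D)$-cycle, that is, a closed alternating arc sequence with distinct arcs. Its sign is the product of the arc signs, and the argument above then goes through verbatim.

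If one insists on genuine sink-source cycles, I would try to show that every alternating closed trail decomposes so that its sign is a product of signs of genuine $\widehat{C}_{2k}$'s. I expect this may fail in general, so I would state the result with the alternating-cycle interpretation that Theorem 3.10 implicitly uses.

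A secondary point is to confirm that Lemma 3.7 really transfers multiplicities both ways. Equation (3.1) gives the spectrum of $A_H^2$, which equals the singular values each doubled. Combined with the spectral symmetry of bipartite signed graphs (Lemma 2.4), equal singular spectra are equivalent to equal spectra of $H$.
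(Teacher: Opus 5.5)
Your route is the paper's: pass to $(H(D),\sigma)$ via the lemma relating singular values of $S$ to eigenvalues of $H(S)$, turn cospectrality with $H(D)$ into balance via Lemma 2.8, and translate balance back to $D$ through the cycle correspondence (the paper's Theorem 3.11, your ``3.10''). For sufficiency the paper uses switching equivalence, $RAC=A_\sigma$ (Theorem 3.9) and Lemma 3.10, while you use the other direction of Lemma 2.8; that difference is immaterial.

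The obstacle you flag is genuine, and it is a gap in the paper's own proof. It is in fact fatal to the statement as worded. In the converse half of Theorem 3.11 the paper labels a cycle of $H(D)$ as $v_1,v_2',v_3,v_4',\dots,v_{2k}'$ with $2k$ distinct indices. This tacitly excludes the possibility that both $v_i$ and $v_i'$ lie on the cycle. Necessity (a negative $\widehat{C}_{2k}$ changes the spectrum) survives, but sufficiency fails.

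Take $D$ to be the complete digraph on $\{v_1,v_2,v_3\}$, with all six arcs. Its underlying graph is $K_3$, so $D$ contains no $\widehat{C}_{2k}$, and the hypothesis holds vacuously for every $\sigma$. Yet $H(D)$ is the $6$-cycle $v_1v_2'v_3v_1'v_2v_3'v_1$, so your sentence ``if $D$ contains no $\widehat{C}_{2k}$, then $H(D)$ is a forest'' is false under the literal reading. Concretely, $A=J-I$ gives $AA^T=J+I$ with eigenvalues $4,1,1$, so $\mathcal{S}(D)=\{2,1^{(2)}\}$. Making the arc $v_1v_2$ negative gives
\[
A_\sigma A_\sigma^T=\begin{bmatrix} 2&1&-1\\ 1&2&1\\ -1&1&2\end{bmatrix},
\]
which has eigenvalues $3,3,0$, so $\mathcal{S}(D,\sigma)=\{\sqrt{3}^{(2)},0\}$. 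Directed $2$-cycles are not essential here. The arcs $ab,cb,ca,ea,ef,af$ form a bowtie with no even cycle in its underlying graph, yet $H(D)$ contains the $6$-cycle $a\,b'\,c\,a'\,e\,f'\,a$, and negating $ab$ changes the singular spectrum from $\{2,1^{(2)},0^{(2)}\}$ to $\{\sqrt{3}^{(2)},0^{(3)}\}$. So your fallback of decomposing alternating closed trails into genuine sink--source cycles cannot work.

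With your reinterpretation, the condition becomes: every cycle of $H(D)$ is positive. Equivalently, every closed alternating trail of distinct arcs in $D$ is positive, since such a trail splits into edge-disjoint cycles of $H(D)$ and its sign is the product of theirs. Under that reading your argument is complete and correct, and it is exactly the statement the paper's argument actually proves.
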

{\bf Proof.} Suppose $D$ and $(D,\sigma)$ have same singular spectrum. Then by Lemma $3.8$, $H(D)$ and $(H(D),\sigma)$ must have same adjacency eigenvalues. If $(D,\sigma)$ contains a negative semidirected cycle $\widehat{C}_{2k}$, then $(H(D),\sigma)$ contains its corresponding negative cycle of the form $C_{2k}$ and so $(H(D),\sigma)$ is unbalanced signed graph. By Lemma $2.8$, $H(D)$ and $(H(D),\sigma)$ have different eigenvalues. By Lemma $3.8$, $D$ and $(D,\sigma)$ have different singular values, a contradiction.\\
Conversely, if $D$ does not contain semidirected cycle of the form $\widehat{C}_{2k}$ for some $k\ge 2$, then by Theorem $3.11$, the bipartite support graph $H(D)$ of $D$ is a forest. Then for any signature $\sigma$ on $D$, the associated bipartite signed graph of $(D,\sigma)$ is a signed forest with underlying graph $H(D)$. As $H(D)$ and $(H(D),\sigma)$ are switching equivalent. By Theorem $3.9$, there exist signature matrices $R$ and $C$ such that $RAC=A_{\sigma}$. By Lemma $3.10$, we see $A$ and $A_{\sigma}$ have same singular values i.e., $D$ and $(D,\sigma)$ have same adjacency singular spectrum.\\
If all semidirected cycles of the form $\widehat{C}_{2k}$ in $(D,\sigma)$ are positive, then the corresponding cycles of the form $C_{2k}$ in $(H(D),\sigma)$ are also positive and so $(H(D),\sigma)$ is balanced and hence $H(D)$ and $(H(D),\sigma)$ are switching equivalent. Again by Theorem $3.9$ and Lemma $3.10$, we see $D$ and $(D,\sigma)$ have same singular spectrum. This completes the proof.\qed
Following is an immediate consequence of Theorem $3.12$.
\begin{corollary}
Let $T\in \mathcal{T}(n)$. Then any two signed directed trees on $T$ have the same singular values.
\end{corollary}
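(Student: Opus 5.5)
The plan is to deduce the corollary from Theorem $3.12$. The key observation is that a directed tree cannot contain any semidirected cycle of the type $\widehat{C}_{2k}$. Let $T\in\mathcal{T}(n)$ and let $\sigma$ be an arbitrary signature on $T$. The underlying graph of $T$ is a tree, so it is acyclic. By definition, a signed subdigraph of type $\widehat{C}_{2k}$ ($k\ge 2$) has a cycle $C_{2k}$ as its underlying graph. Hence $(T,\sigma)$ contains no such subdigraph, and the first alternative in the hypothesis of Theorem $3.12$ holds vacuously.

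Applying Theorem $3.12$ with $D=T$ then shows that $T$ and $(T,\sigma)$ have the same singular spectrum. Now take two signatures $\sigma_1,\sigma_2$ on $T$. Each of $(T,\sigma_1)$ and $(T,\sigma_2)$ has the same singular spectrum as $T$, so by transitivity they share a common singular spectrum, which is the claim.

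As a cross-check, or as an alternative route, I would argue through the bipartite support graph. By Theorem $3.11$, $H(T)$ has no cycles, since $T$ has no $\widehat{C}_{2k}$; thus $H(T)$ is a forest. Every signing of a forest is balanced, so $(H(T),\sigma_1)$ and $(H(T),\sigma_2)$ are switching equivalent. Theorem $3.9$ then supplies signature matrices $R,C$ with $RA_{\sigma_1}C=A_{\sigma_2}$, and Lemma $3.10$ gives equality of the singular spectra directly.

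I do not expect a real obstacle here. The only point needing care is that ``$\widehat{C}_{2k}$-type subdigraph'' refers to a structure whose underlying graph is a cycle, which a tree cannot contain. One should also check that digons do not interfere: a digon would become a single edge in the underlying graph, and in a directed tree there are none, since the underlying graph is a simple tree on $n$ vertices with $n-1$ edges.
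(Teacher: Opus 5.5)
\textbf{There is a genuine gap.} You follow the paper's own route: the paper calls the corollary an immediate consequence of Theorem~3.12, and its Proof~1 is your second argument. The step you treat as automatic is where it fails. The implication ``$D$ has no $\widehat{C}_{2k}$ $\Rightarrow$ $H(D)$ is a forest'' (Theorem~3.11) is false for general digraphs, and so is the sufficiency direction of Theorem~3.12 that you invoke.

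A cycle $v_{a_1}v'_{b_1}v_{a_2}v'_{b_2}\cdots v_{a_k}v'_{b_k}v_{a_1}$ in $H(D)$ only gives a closed trail $a_1b_1a_2b_2\cdots a_kb_ka_1$ in $D$ with alternating arc directions. A vertex may occur on this trail both as some $a_i$ and as some $b_j$; the converse half of the proof of Theorem~3.11 tacitly rules this out.

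For a counterexample, let $D$ have vertices $1,\dots,5$ and arcs $12,32,31,41,45,15$. Its underlying graph is two triangles sharing vertex $1$, so $D$ has no $\widehat{C}_{2k}$. Yet $v_1v_2'v_3v_1'v_4v_5'v_1$ is a $6$-cycle in $H(D)$. The block of $AA^T$ on rows and columns $1,3,4$ (the other rows are zero) is $I+J$, with $J$ the all-ones matrix, and has eigenvalues $4,1,1$. Making the arc $12$ negative turns this block into $\begin{bmatrix}2&-1&1\\-1&2&1\\1&1&2\end{bmatrix}$, with eigenvalues $3,3,0$. So the singular spectrum changes from $\{2,1^{(2)},0^{(2)}\}$ to $\{\sqrt{3}^{(2)},0^{(3)}\}$, even though the hypothesis of Theorem~3.12 holds vacuously. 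Neither of your two citations is therefore a valid justification as it stands.

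\textbf{How to repair it.} For directed trees the conclusion is still true, but it needs a direct argument. A $2k$-cycle in $H(T)$ uses $2k$ distinct arcs $a_1b_1,a_2b_1,a_2b_2,\dots,a_kb_k,a_1b_k$ of $T$. Since $T$ has no digons, these are $2k$ distinct edges of the underlying tree, each traversed once by a closed walk. That is impossible, because a closed walk crosses every edge of a tree an even number of times. Hence $H(T)$ is a forest, any two signings of it are switching equivalent, and Theorem~3.9 with Lemma~3.10 finishes your second route. The point needing care is whether $H(T)$ has a cycle, not whether $T$ has a $\widehat{C}_{2k}$; these differ in general.

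Alternatively, the paper's Proof~2 avoids $H$ and Theorems~3.11--3.12 entirely. Put $\chi(v_1)=1$ and let $\chi(v_i)$ be the sign of the unique $v_1$--$v_i$ path in the underlying tree. Then $\chi(u)\sigma(uv)\chi(v)=1$ for every arc $uv$, so $MA(K)M=A(T)$ with $M=\mathrm{diag}(\chi(v_1),\dots,\chi(v_n))$. Ordinary switching therefore already yields equal singular spectra.
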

{\bf Proof 1.} Let $(T,\tau)$ and $(T,\sigma)$ be any two signed directed trees on $T\in \mathcal{T}(n)$. Then $(H(D),\tau)$ and $(H(D),\sigma)$ are acyclic signed graphs with underlying graph $H(D)$. Clearly, $(H(D),\tau)$ and $(H(D),\sigma)$ are switching equivalent. Apply Theorem $3.9$ and Lemma $3.10$, the result follows.\\ 
{\bf Proof 2.} Given a directed tree $T\in\mathcal{T}(n)$, let $\mathcal{S}(T)$ denote the collection of all signed directed trees on $T$. We will show that any signed directed tree $K\in \mathcal{S}(T)$ is switching equivalent to $T$, the all positive directed tree. Let the $n$ vertices of $K$ be $v_{1}, v_{2}, \dots, v_{n}$. Define $\chi(v_{1}) = 1$ and $\chi(v_{i}) =$ sign of semi directed path from  $v_{1}$ to $v_{i}$ in $K$. 
 Then $K^{\chi}$ is the all positive signed directed tree $T$. 
 Accordingly, 
 $$M^{-1}A(K)M = A(T),$$ where $M=\text{diag}(\chi(v_1),\chi(v_2),\dots,\chi(v_n))$ is the signature matrix. Clearly, $A(K)$ and $A(T)$ have the same singular values.\qed
\section{\bf A lower bound for the trace norm of signed digraphs}\label{sec2}

The rank of a signed digraph $S$ is the number of positive singular values of its adjacency matrix. For digraphs, the singular values and rank have been studied in \cite{amr,zxw}. In \cite{wd}, the authors produce classification of signed digraphs with rank $2,3$ but considered a different matrix namely Eisenstein matrix. Our matrix is the usual adjacency matrix here. We first provide characterization of signed digraphs with rank one and use this to study the equality case in lower bound for the trace norm of signed digraphs. From here onwards, we assume our signed digraphs have no isolated vertices. The following result characterizes signed digraphs with rank $1$.
\begin{theorem}
Let $S = (D,\sigma)$ be a signed digraph of order $n \geq 2$. Then $\text{Rank}(S) = 1$
if and only if $S = (\overrightarrow{K}_{p,q}, \sigma)$ not containing  $(\overrightarrow{K}_{2,2}, \sigma)$ as a signed subdigraph with the negative sign.
\end{theorem}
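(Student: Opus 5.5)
The plan is to work directly with the adjacency matrix $A=A(S)$ and the standard fact that a real matrix has rank one if and only if it equals $xy^T$ for nonzero vectors $x,y$. Since the rank of $S$ is the number of positive singular values, it equals the ordinary rank of $A$. By the standing assumption, $S$ has no isolated vertices.

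For necessity, suppose $\text{Rank}(S)=1$ and write $A=xy^T$. Let $P=\{v_i: x_i\neq 0\}$ and $Q=\{v_j: y_j\neq 0\}$. Then $a_{ij}\neq 0$ exactly when $v_i\in P$ and $v_j\in Q$, so every arc goes from $P$ to $Q$, and every vertex of $P$ sends an arc to every vertex of $Q$.

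Next I show $P\cap Q=\emptyset$. If some $v_i\in P\cap Q$, then $a_{ii}=x_iy_i\neq 0$, which is a loop and contradicts simplicity. Since there are no isolated vertices, each vertex is the tail or the head of some arc, so $P\cup Q=\mathcal{V}$. Hence the underlying digraph is exactly $\overrightarrow{K}_{p,q}$ with $p=|P|$ and $q=|Q|$.

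For the sign condition, a $\overrightarrow{K}_{2,2}$ subdigraph on $u_1,u_2\in P$ and $v_1,v_2\in Q$ is a $2\times 2$ submatrix of $A$. Its sign is
\[
\sigma(u_1v_1)\sigma(u_1v_2)\sigma(u_2v_1)\sigma(u_2v_2)=x_1^2x_2^2y_1^2y_2^2\,\text{sgn}>0,
\]
where the entries are $a_{ij}=x_iy_j$ with $x_i,y_j\in\{\pm1\}$ after rescaling. Equivalently, that $2\times2$ minor vanishes, which forces the product of its four entries to be $+1$. So no negative $\overrightarrow{K}_{2,2}$ occurs.

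For sufficiency, assume $S=(\overrightarrow{K}_{p,q},\sigma)$ with every $\overrightarrow{K}_{2,2}$ positive. After ordering the vertices, $A=\begin{bmatrix}0&B\\0&0\end{bmatrix}$, where $B$ is a $p\times q$ matrix with entries in $\{\pm1\}$. It suffices to show that $B$ has rank one. Set $x_i=b_{i1}$ and $y_j=b_{11}b_{1j}$. For $i,j\ge 2$, positivity of the $\overrightarrow{K}_{2,2}$ on $u_1,u_i,v_1,v_j$ gives $b_{11}b_{1j}b_{i1}b_{ij}=1$, so $b_{ij}=b_{i1}b_{11}b_{1j}=x_iy_j$. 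The cases where $i=1$ or $j=1$ are checked directly. Thus $B=xy^T\neq 0$ and $\text{Rank}(S)=1$.

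A second route is to read the sign condition as balance of the bipartite support graph $H(S)$. Its cycles are generated by the $4$-cycles coming from the $\overrightarrow{K}_{2,2}$'s, as in Theorem 3.11. Theorem 3.9 would then give $RAC=A(\overrightarrow{K}_{p,q})$, and Lemma 3.10 would transfer the rank. The direct factorization above avoids the cycle-space argument, so I will use that.

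The main obstacle is the necessity direction: justifying cleanly that $P$ and $Q$ are disjoint and cover $\mathcal{V}$. This uses both simplicity (no loops) and the no-isolated-vertices assumption, and without it the underlying digraph need not be exactly $\overrightarrow{K}_{p,q}$. The edge cases $p=1$ or $q=1$ are handled the same way; there the $\overrightarrow{K}_{2,2}$ condition is vacuous.
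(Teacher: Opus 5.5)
Your proof is correct, and it takes a genuinely different route from the paper's.

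\textbf{The paper's route.} It proves necessity through three separate forbidden-configuration claims:
\begin{enumerate}
\item Arcs $uv,vw$ give a $2\times 2$ triangular submatrix of rank $2$. Hence every vertex is a sink or a source, and $D$ is bipartite by Lemma 2.2.
\item A missing arc $v_1u_1$ together with an arc $v_2u_1$ gives two linearly independent rows, so $D=\overrightarrow{K}_{p,q}$.
\item A negative $\overrightarrow{K}_{2,2}$ gives a $2\times 2$ submatrix with two positive singular values, so the rank is at least $2$ by interlacing (Lemma 2.1).
\end{enumerate}
The case $n=2$ is treated separately. Sufficiency is dispatched by noting that all $2\times 2$ minors vanish.

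\textbf{Your route.} You start from $A=xy^T$ and read everything off the supports of $x$ and $y$. The support of $A$ is the rectangle $P\times Q$. Looplessness gives $P\cap Q=\emptyset$, and the absence of isolated vertices gives $P\cup Q=\mathcal{V}$. This delivers sink--source structure, bipartiteness and completeness in one step, and $n=2$ needs no special treatment. The sign of any $\overrightarrow{K}_{2,2}$ is then a product of squares. For sufficiency you build $x,y$ explicitly from the first row and column of $B$, which makes precise the paper's terse ``all $2\times 2$ minors are zero, hence rank one.''

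\textbf{What each approach buys.} Yours is more elementary and self-contained: it needs neither Lemma 2.2 nor interlacing. Interlacing is more than the paper needs anyway, since the rank of a submatrix never exceeds that of the matrix. Because your $x,y$ can be chosen in $\{\pm 1\}$, you also get $A=R\,A(\overrightarrow{K}_{p,q})\,C$ for signature matrices $R,C$ directly. That is exactly what your ``second route'' through Theorem 3.9 aimed for, obtained here without any cycle-space argument. The paper's approach, in turn, has the merit of exhibiting the obstructions to rank one as explicit local configurations in the digraph.

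\textbf{A cosmetic fix.} The displayed product in your sign paragraph is garbled, and no rescaling is needed there. Directly,
\[
\sigma(u_1v_1)\sigma(u_1v_2)\sigma(u_2v_1)\sigma(u_2v_2)=x_{u_1}^2x_{u_2}^2y_{v_1}^2y_{v_2}^2>0 .
\]
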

{\bf Proof.} For $n=2$, the possible signed digraphs are $ (\overrightarrow{P}_{2}, \sigma)$ and $ (\overrightarrow{C}_{2}, \sigma)$. As  $\mathcal{S}((\overrightarrow{P}_{2}, \sigma))=\{1,0\}$ and $\mathcal{S}((\overrightarrow{C}_{2}, \sigma))= \{1^{(2)}\}$. Therefore, $ (\overrightarrow{P}_{2}, \sigma)$ is the only signed digraph on two vertices with rank $1$.\\
Assume $S=(D(\mathcal{V},\mathcal{A}), \sigma)$ has $n\ge 3$ vertices and $\text{Rank}(S)=1$ i.e., $A(S)$ has only one linearly independent row.\\ \textbf{Claim 1}. $S$ cannot have two signed arcs of the form $uv$,$vw$ where $\{u,v,w\} \subset \mathcal{V}$.\\ \textbf{Proof of claim 1}. Suppose signed digraph $S$ has arcs of the form $uv$, $vw$, where $u\ne v\ne w$. Then adjacency matrix $A(S)$ of $S$ has a $2\times2$ submatrix of the form
\[
\begin{bmatrix}
\sigma(uv) &*\\
0 &\sigma(vw)
\end{bmatrix},~~
\text{where}~\sigma(uv),\sigma(vw)\in\{-1,+1\}.
\]
So, $\text{Rank}(S)\ge 2$, a contradiction. This proves the claim.\\
Hence each vertex of $D$ is either a sink or a source vertex and so by Lemma $2.2$, $D$ is bipartite. Assume the partite sets of $D$ have cardinality $p$ and $q$ so that $p+q=n$.\\ 
\textbf{Claim 2}. The underlying digraph $D$ of $S$ is not a proper subdigraph of $\overrightarrow K_{p,q}$. \\
 \textbf{Proof of claim 2}.
If $D$ is a proper subdigraph of $\overrightarrow K_{p,q}$ with partite sets $V_1=\{v_1,v_2,\cdots,v_p\}$ and $V_2=\{u_1,u_2,\cdots,u_q\}$, then some vertex of $V_1$ is not adjacent to some vertex of $V_2$ say arc $v_1u_1\notin\mathcal{A}(S)$. Since $u_1$ is not an isolated vertex, therefore for some $v_i$, where $i=2,3,\dots,n$ say $v_2$, $v_2u_1\in \mathcal{A}(S)$. Then the rows indexed by $v_1$ and $v_2$ of the adjacency matrix $A(S)$ of $S$ are linearly independent and hence $\text{Rank}(S)$ is at least $2$. This proves $D=\overrightarrow K_{p,q}$ with $p+q=n$.
Therefore, the underlying digraph $D$ of $S$ is $\overrightarrow K_{p,q}$ with $p+q=n$. \\
\textbf{Claim 3}. $S=(\overrightarrow K_{p,q},\sigma)$ does not contain signed subdigraph $\overrightarrow K_{2,2}$ with negative sign.\\
\textbf{Proof of claim 3}. If $S$ contains $\overrightarrow K_{2,2}$ with negative sign, then $A(S)$ has a submatrix of order $2$ with entries from$\{-1,+1\}$ and having odd number of $-1$'s. It is easy to see that such a submatrix has two positive singular values and consequently by interlacing property Lemma $2.1$, $S$ has at least two positive singular values and hence $\text{Rank}(S)\ge 2$.  This proves the claim $3$.\\
Conversely, if $S = (\overrightarrow{K}_{p,q}, \sigma)$ not containing  $(\overrightarrow{K}_{2,2}, \sigma)$ as a signed subdigraph with the negative sign, then all $2\times 2$ minors are zero and hence all minors of order greater than $2$ of $A(S)$ are also zero. So, Rank (S)=1. \qed
Given two distinct vertices $u$ and $v$ in a signed digraph, by $\widehat{P}_2(u,v)$ we denote the semidirected path of length $2$ with arcs $uw$ and $vw$ and by $\widecheck{P}_2(u,v)$, we denote the semidirected path of length $2$ having arcs $wu$ and $wv$, where $w\neq u,v$.\\
In the next result, we consider the problem of characterizing signed digraphs with one singular value.
\begin{lemma} Let $S=(D,\tau)$ be a signed digraph of order $n$. Then $S$ has just one singular value i.e.,  $\sigma_1(S)=\sigma_2(S)=\dots=\sigma_n(S)=\sigma(S)$ if and only if $D$ is $\sigma^2(S)$ regular and for any two distinct vertices $v_i$ and $v_j$ in $S$, the number of positive $\widehat{P}_2(v_i,v_j)$ semidirected paths is equal to the number of negative $\widehat{P}_2(v_i,v_j)$ semidirected paths and the number of positive paths $\widecheck{P}_2(v_i,v_j)$ is equal to the number of negative $\widecheck{P}_2(v_i,v_j)$ paths. 
\end{lemma}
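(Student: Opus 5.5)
The approach is to translate the condition ``all singular values equal $\sigma$'' into the matrix identities $AA^T=\sigma^2 I_n$ and $A^TA=\sigma^2 I_n$, where $A=A(S)$, and then read these entrywise. Since $AA^T$ is real symmetric positive semidefinite, its eigenvalues are $\sigma_1^2,\dots,\sigma_n^2$. So all $\sigma_i$ equal $\sigma$ if and only if $AA^T$ is diagonalizable with the single eigenvalue $\sigma^2$, that is, $AA^T=\sigma^2 I_n$. Because $A$ is square, $A^TA$ has the same eigenvalues as $AA^T$; it is also symmetric, so the same reasoning gives $A^TA=\sigma^2 I_n$. Conversely, either identity forces every singular value to be $\sigma$. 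The whole proof therefore reduces to interpreting the entries of $AA^T$ and $A^TA$ combinatorially.

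For the diagonal entries, I will use $(AA^T)_{ii}=\sum_k a_{ik}^2=d^+(v_i)$ and $(A^TA)_{jj}=\sum_k a_{kj}^2=d^-(v_j)$, since the entries are $0,\pm1$. Thus the diagonal conditions say exactly that $d^+(v)=d^-(v)=\sigma^2$ for every vertex, i.e.\ $D$ is $\sigma^2$-regular.

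For the off-diagonal entries with $i\neq j$, I will write $(AA^T)_{ij}=\sum_k a_{ik}a_{jk}$. The nonzero terms correspond exactly to vertices $w=v_k$ with arcs $v_iw$ and $v_jw$, that is, to the semidirected paths $\widehat{P}_2(v_i,v_j)$. Each such term equals $\sigma(v_iw)\sigma(v_jw)$, which is the sign of that path. Hence $(AA^T)_{ij}$ equals the number of positive $\widehat{P}_2(v_i,v_j)$ minus the number of negative ones. Similarly, $(A^TA)_{ij}=\sum_k a_{ki}a_{kj}$ counts the paths $\widecheck{P}_2(v_i,v_j)$ with arcs $wv_i$ and $wv_j$, again weighted by sign. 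So the vanishing of the off-diagonal entries is precisely the stated balance between positive and negative paths. Combining the diagonal and off-diagonal conditions gives both directions of the lemma at once.

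The step needing the most care is the first one: deducing $AA^T=\sigma^2 I$ and, separately, $A^TA=\sigma^2 I$ from the equality of singular values. The second identity uses that $A^TA$ and $AA^T$ share eigenvalues, which holds because $A$ is square, together with symmetry to pass from ``single eigenvalue'' to ``scalar matrix''. I also need to check that $w\neq v_i,v_j$ holds automatically, which follows because there are no loops. In the degenerate case $\sigma=0$ the digraph is discrete (Lemma 2.5), and $0$-regularity is consistent with this.
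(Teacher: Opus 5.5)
Your proposal is correct and follows essentially the same route as the paper: you reduce the equal-singular-value condition to $AA^T=A^TA=\sigma^2 I_n$, then read the diagonal entries as out- and in-degrees and the off-diagonal entries as signed counts of $\widehat{P}_2(v_i,v_j)$ and $\widecheck{P}_2(v_i,v_j)$ paths. The only cosmetic difference is how you obtain the two identities: the paper gets both at once from the singular value decomposition $A=\sigma(S)W$ with $W$ orthogonal, whereas you get them from the spectral theorem applied to the symmetric matrices $AA^T$ and $A^TA$.
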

{\bf Proof.} If $S$ is discrete, then the result follows by Lemma $2.6$. We now assume that $S$ has at least one arc. let $A=A(S)$ be the adjacency matrix of $S$. By singular value decomposition, there is a real orthogonal matrix say $W$ such that $A=\sigma(S)W$. Accordingly $AA^T=A^TA=\sigma^2(S)I_n$. Therefore, $W$ is a weighing matrix of order $n$ and weight $\sigma^2(S)>0$.\\
For $i=j$, we see $[AA^T]_{ii}=\sigma^2(S)$ for all $i=1,2,\dots,n$. That is $d^+_i=\sigma^2(S)$ for all $i=1,2,\dots,n$. Again, $A^TA=\sigma^2(S)I_n$ gives $d^-_i=\sigma^2(S)$ for all $i=1,2,\dots,n$. Therefore $d^+_i=d^-_i=d ~(\text{say})=\sigma^2(S)$. Thus underlying digraph $D$ of $S$ is $\sigma^2(S)$ regular.\\
Further $[AA^T]_{ij}=\sum_{k=1}^n[A]_{ik}[A^T]_{kj}=\sum_{k=1}^n[A]_{ik}[A]_{jk}=0$ gives for any two distinct vertices say $v_i$ and $v_j$ of $S$, the number of positive $\widehat{P}_2(v_i,v_j)$ paths is equal to the number of negative  $\widehat{P}_2(v_i,v_j)$ paths. Similarly, for $i\ne j$,  $[A^TA]_{ij}=0$ gives the number of positive $\widecheck{P}_2(v_i,v_j)$ paths is equal to the number of negative  $\widecheck{P}_2(v_i,v_j)$ paths.\\
Conversely, suppose $S=(D,\tau)$ is a signed digraph such that $D$ is $\sigma^2(S)$ regular and for any two distinct vertices say $v_i$ and $v_j$ of $S$ the number of positive  $\widehat{P}_2(v_i,v_j)$ paths is equal to the number of negative $\widehat{P}_2(v_i,v_j)$ paths and the number of positive $\widecheck{P}_2(v_i,v_j)$ paths is equal to the number of negative $\widecheck{P}_2(v_i,v_j)$ paths. Then clearly $[AA^T]_{ij}=0=[A^TA]_{ij}$ for $i\ne j$ and $[AA^T]_{ii}=[A^TA]_{ii}=\sigma^2(S)$ for all $i=1,2,\dots,n$. So that $AA^T=A^TA=\sigma^2(S)I_n$. This shows that $S$ has  single singular values $\sigma(S)$. This completes the proof.\qed
In the next result, we obtain a lower bound for the trace norm of signed digraphs in terms of number of vertices and arcs.
\begin{theorem}
Let $S=(D,\sigma)$ be a signed digraph with $n \geq 2$ vertices, $m$ arcs and having no isolated vertices. Let $A$ be the adjacency matrix of $S$ and  $\sigma_1(S) \geq \sigma_2(S) \geq \dots \geq \sigma_n(S) \geq 0$ be the singular values of $S$.  Then\\
\begin{equation}
 \|S\|_* \;\geq\; \sqrt{\,m \;+\; n(n-1)\, |\det A(S)|^{2/n}} \,,
 \end{equation}

\medskip
with equality if and only if  
\begin{enumerate}
    \item $S = (\overrightarrow{K}_{p,q}, \sigma)$ not containing those $(\overrightarrow{K}_{2,2}, \sigma)$ having negative sign.
    \item $D$ is regular and for any two distinct vertices $u$ and $v$ in $S$, the number of positive $\widehat{P}_2(u,v)$ paths is equal to the number of negative $\widehat{P}_2(u,v)$ paths and the number of positive $\widecheck{P}_2(u,v)$ paths is equal to the number of negative $\widecheck{P}_2(u,v)$ paths.

\end{enumerate}

\end{theorem}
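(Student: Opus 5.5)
Squaring the trace norm gives
\[
\|S\|_*^2=\sum_{i=1}^n\sigma_i^2+\sum_{i\neq j}\sigma_i\sigma_j .
\]
The first sum is $\operatorname{tr}(AA^T)=\sum_{i,j}a_{ij}^2=m$, because every nonzero entry of $A(S)$ is $\pm1$ and there is one such entry for each arc. The second sum has $n(n-1)$ nonnegative terms, so the arithmetic--geometric mean inequality applies to it:
\[
\sum_{i\neq j}\sigma_i\sigma_j\;\ge\; n(n-1)\Big(\prod_{i\neq j}\sigma_i\sigma_j\Big)^{1/(n(n-1))}.
\]
Each $\sigma_i$ occurs $2(n-1)$ times in the product $\prod_{i\ne j}\sigma_i\sigma_j$, so this product equals $\big(\prod_i\sigma_i\big)^{2(n-1)}$. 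Since $\prod_i\sigma_i=\sqrt{\det(AA^T)}=|\det A(S)|$, the right-hand side becomes $n(n-1)|\det A(S)|^{2/n}$. Taking square roots gives the stated inequality. This is the same argument as the classical McClelland-type bound.

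For the equality case, note that equality holds exactly when the AM--GM step is tight, that is, when all products $\sigma_i\sigma_j$ with $i\ne j$ are equal. I would show that for $n\ge 3$ this forces one of two situations.

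\begin{enumerate}
\item All $\sigma_i$ are equal.
\item All the products $\sigma_i\sigma_j$ vanish, which means at most one $\sigma_i$ is nonzero.
\end{enumerate}

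The case split goes as follows. If two singular values are nonzero, then comparing $\sigma_i\sigma_k$ with $\sigma_j\sigma_k$ for a third index $k$ with $\sigma_k\neq0$ shows they are equal. If exactly two are nonzero and $n\ge3$, then a product involving a zero singular value is $0$ while $\sigma_1\sigma_2>0$, which is a contradiction. So either every $\sigma_i$ is positive, and then all are equal by the comparison just made, or at most one is positive. Since $S$ has no isolated vertices and $m\ge1$, "at most one positive" means $\operatorname{Rank}(S)=1$. Theorem 4.1 then identifies these $S$ as exactly those in item 1 of the statement, and conversely such $S$ have $\det A=0$ and $\|S\|_*=\sigma_1=\sqrt{m}$, so equality holds. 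The "all equal" case is exactly the hypothesis of Lemma 4.2, which gives the condition in item 2, and conversely item 2 gives $AA^T=dI$, hence equal singular values and equality.

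The step I expect to be the main obstacle is $n=2$. There the AM--GM step is trivially an equality, since the only products are $\sigma_1\sigma_2$ taken twice. So equality holds for every signed digraph on two vertices, and I would check directly that $\overrightarrow{P}_2$ and $\overrightarrow{C}_2$ with any signing already fall under items 1 and 2 respectively. A smaller point is that the regularity in item 2 should be read as giving $d=\sigma^2$, so that Lemma 4.2 applies in both directions.
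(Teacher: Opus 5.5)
Your proposal is correct and follows the paper's proof. Both use the same McClelland-type AM--GM argument on the products $\sigma_i\sigma_j$; you sum over ordered pairs and the paper over $i<j$, which is only cosmetic. Both then split the equality case the same way, into the rank-one case handled by Theorem 4.1 and the all-equal case handled by Lemma 4.2. Your version is slightly more careful: the paper asserts that $\sigma_1\sigma_2>0$ forces all singular values to be equal without noting that this needs $n\ge 3$, and your direct check that $\overrightarrow{P}_2$ and $\overrightarrow{C}_2$ fall under items 1 and 2 closes that small gap.
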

{\bf Proof.} 
Let $A$ be the adjacency matrix and $\sigma_1(S), \sigma_2(S), \dots,\sigma_n(S)$ be the singular values of $S$. We have the Frobenius norm
\[
\|S\|_F^2=\sum_{i,j} |a_{ij}|^2 = m.
\]
Also, $\|S\|_F^2=\sum_{i=1}^n \sigma_i^2(S)$ and $\prod_{i=1}^n \sigma_i(S) = |\det A(S)|$.

We have 
\begin{equation*}
\begin{aligned}
\|S\|_*^2&=\Big(\sum_i \sigma_i(S)\Big)^2\\
&=\sum_i \sigma_i(S)^2 + 2\sum_{i<j} \sigma_i(S)\sigma_j(S)\\
&= \|S\|_F^2 + 2\sum_{i<j} \sigma_i(S)\sigma_j(S) \\
&= m + 2\sum_{i<j} \sigma_i(S)\sigma_j(S).
\end{aligned}
\end{equation*}

By AM--GM inequality applied to the $\binom{n}{2}$ products, we have
\begin{equation*}
\begin{aligned}
\frac{1}{\binom{n}{2}}\sum_{i<j} \sigma_i(S) \sigma_j(S)\; &\ge\;
\Bigg(\prod_{i<j} \sigma_i(S) \sigma_j(S)\Bigg)^{\!1/\binom{n}{2}}\\
&= \Big( \prod_{i=1}^n \sigma_i(S) \Big)^{\!(n-1)/\binom{n}{2}}\\
&= |\det A(S)|^{\,2/n}.
\end{aligned}
\end{equation*}

Therefore, 
\[
\|S\|_* \;\ge\; \sqrt{\,m + n(n-1)\,|\det A(S)|^{2/n}}.
\]

Assume the equality holds in $(4.1)$, then  equality hold in AM--GM inequality applied on the collection
$\{\sigma_i(S) \sigma_j(S) : 1\le i<j\le n\}$, gives $\sigma_i(S)\sigma_j(S)= c \quad \text{for all } i<j$.\\
Two cases arise here.\\
\textbf{Case 1.} If $\sigma_1(S)\sigma_2(S) > 0$, then $\sigma_1(S)=\sigma_2(S)=\dots=\sigma_n(S)>0$.  
By Lemma $4.2$, the result follows in this case.\\  
\textbf{Case 2.} If $\sigma_1(S) > 0$, $\sigma_2(S)=0$, then $\sigma_2(S)=\sigma_3(S)=\dots=\sigma_n(S)=0$. Therefore, $\text{Rank}(S)=1$. By Theorem $4.1$, $S = (\overrightarrow{K}_{p,q}, \sigma)$ not containing those $(\overrightarrow{K}_{2,2}, \sigma)$ having negative sign.\\  
Conversely,  if $S = (\overrightarrow{K}_{p,q}, \sigma)$ not containing those $(\overrightarrow{K}_{2,2}, \sigma)$ having negative sign, then by Theorem $4.1$, $\text{Rank}(S)=1$ and so $S$ has only one positive singular value. Recall that sum of squares of the singular values is equal to the number of arcs, we see $\sigma_1(S)=\sqrt{pq}$ and $\sigma_i(S)=0$ for $i=2,3,\dots,n$. It is easy to see both sides of $(4.1)$ are equal to $\sqrt{pq}$.\\
If $D$ is  $d$ regular and for any two distinct vertices $u$ and $v$ in $S$, the number of positive $\widehat{P}_2(u,v)$  paths is equal to the number of negative $\widehat{P}_2(u,v)$  paths and the number of positive paths $\widecheck{P}_2(u,v)$ is equal to the number of negative $\widecheck{P}_2(u,v)$ paths, then by Lemma $4.2$, $\sigma_1(S) = \sigma_2(S) =\dots = \sigma_n(S)= \sqrt{d}$. It is easy to see that both sides of $(4.1)$ are equal to $n\sqrt{d}$. This completes the proof.\qed
The next result is an immediate consequence of Theorem $4.3$ and determines signed directed trees with minimum trace norm.
\begin{corollary}
Let $T\in (\mathcal{T}(n), \sigma)$. Then $$\|T\|_*\ge \sqrt{n-1},$$ 
with equality if and only if  $T=(\overrightarrow{K}_{1,n-1},\sigma)~ \text{or}~ T=(\overrightarrow{K}_{n-1,1},\sigma).$ \end{corollary}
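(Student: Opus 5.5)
The plan is to specialize Theorem 4.3 to directed trees. Let $T$ be a signed directed tree of order $n\ge 2$ with sign function $\sigma$. It has $m=n-1$ arcs and no isolated vertices.

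The first step is to show $\det A(T)=0$. Since $T$ has no directed cycles, its adjacency matrix can be brought to strictly triangular form under a topological ordering of the vertices. Equivalently, $T$ has no linear subdigraph covering all vertices. Either way $\det A(T)=0$, and (4.1) reduces to
\[
\|T\|_*\ge\sqrt{m}=\sqrt{n-1}.
\]
A more direct route gives the same bound. Since $\sigma_i\ge 0$,
\[
\Big(\sum_i\sigma_i\Big)^2\ge\sum_i\sigma_i^2=\|A\|_F^2=n-1.
\]
Equality holds exactly when at most one singular value is positive, that is, when $\text{Rank}(T)\le 1$. Because $T$ has arcs, this means exactly $\text{Rank}(T)=1$.

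For the equality case I would argue directly rather than through the two alternatives of Theorem 4.3. Alternative 2 of that theorem would force $\sigma_1=\dots=\sigma_n>0$, hence $\det A\neq 0$, which contradicts $\det A(T)=0$. So only the rank-one case survives. By Theorem 4.1, $\text{Rank}(T)=1$ holds if and only if $T=(\overrightarrow{K}_{p,q},\sigma)$ with no negative $(\overrightarrow{K}_{2,2},\sigma)$. Since $T$ is a tree, its underlying graph $K_{p,q}$ must be acyclic. This forces $\min(p,q)=1$, because $p,q\ge 2$ would give a $4$-cycle. Hence $T=(\overrightarrow{K}_{1,n-1},\sigma)$ or $(\overrightarrow{K}_{n-1,1},\sigma)$. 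These contain no $\overrightarrow{K}_{2,2}$ at all, so every signing is allowed.

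For the converse, a signed out-star or in-star has rank one. This follows from Theorem 4.1, or simply because all its nonzero entries lie in a single row or column. Its unique nonzero singular value is therefore $\sqrt{n-1}$, and the bound is attained.

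The main obstacle is justifying $\det A(T)=0$ cleanly, since the paper's "directed tree" could mean an arbitrary orientation of a tree. Acyclicity of the underlying graph excludes directed cycles, including $2$-cycles, which would be parallel edges in the underlying graph. So the nilpotency/triangular argument applies to any such orientation. The case $n=2$ is $\overrightarrow{P}_2=\overrightarrow{K}_{1,1}$, which is consistent with the statement.
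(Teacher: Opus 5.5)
Your proposal is correct and follows essentially the route the paper intends when it calls the result an immediate consequence of Theorem 4.3: you specialize that theorem using $m=n-1$ and $\det A(T)=0$, rule out the equal-singular-value alternative, and use Theorem 4.1 together with acyclicity of the underlying graph to force $\min(p,q)=1$. One small correction: under the paper's convention a $2$-cycle becomes a single edge of the underlying graph, not a pair of parallel edges, so what actually excludes $2$-cycles is the arc count $m=n-1$ (that is, $T$ being an oriented tree); this does not affect your argument.
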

\begin{example} Let $S=(D,\tau)$ be a union of signed directed cycles and let order of $S$ be $n$. It is clear that $S$ is $1$ regular and satisfies structural condition stated in case $2$ of Theorem $4.3$. It is easy to see that $H(S)$ is a signed matching of size $n$ and has spectrum $\{-1^{(n)},1^{(n)}\}$.  Accordingly, $\mathcal{S}(S)=\{1^{(n)}\}$. This provides a family of signed digraphs satisfying condition $2$ stated in Theorem $4.3$. Apart from this Hou et al. \cite{htw} provided examples of signed graphs with two distinct eigenvalues of equal modulus and hence their symmetric signed digraphs have all singular values equal. The problem of determining signed digraphs with one singular value actually is equivalent to determine  signed digraphs whose adjacency matrix is a weighing matrix with  zero diagonal.
\end{example}
\section{\bf Concluding remarks and future directions}
 We see from Theorem $4.3$ that if a signed digraph $S=(D,\tau)$ with atleast one arc, has all singular values $\sigma_i(S)$, $i=1,2,\dots,n$ equal. Then $D$ is regular and for any two distinct vertices $u$ and $v$ in $S$, the number of positive $\widehat{P}_2(u,v)$ paths is equal to the number of negative $\widehat{P}_2(u,v)$ paths and the number of positive $\widecheck{P}_2(u,v)$ paths is equal to the number of negative $\widecheck{P}_2(u,v)$ paths. At this moment, we are not able to determine such signed digraphs. So, we conclude this paper with the following problem for future study. 
\begin{question}\upshape Determine all signed digraphs $S=(D,\tau)$ such that $D$ is regular and for any two distinct vertices $u$ and $v$ in $S$, the number of positive $\widehat{P}_2(u,v)$ paths is equal to the number of negative $\widehat{P}_2(u,v)$ paths and the number of positive $\widecheck{P}_2(u,v)$ paths is equal to the number of negative $\widecheck{P}_2(u,v)$ paths.
\end{question}  

\noindent{\bf Acknowledgements.} The research of   M. A. Bhat is supported by NBHM project No. 02011/42\\/2025/NBHM(RP)/R\&DII/16567 and by SERB-DST grant with File No. MTR/2023/000201.

\end{document}